\documentclass[11pt,reqno]{amsart}

\usepackage{amsmath,amsfonts,amsthm,amscd,amssymb,graphicx,mathrsfs}
\usepackage[utf8]{inputenc}
\usepackage{amsbsy}
\usepackage{graphicx}
\usepackage{subcaption}
\usepackage{hyperref}
\usepackage{cleveref}
\usepackage[text={6.5in,9in},centering,includefoot,foot=0.6in]{geometry}
\usepackage{cite}

\usepackage{todonotes}
\usepackage{enumitem}
\definecolor{skyblue}{rgb}{0.85,0.85,1}

\setlength{\parskip}{1.0ex plus0.2ex minus0.2ex}
\setlength{\parindent}{0.0in}

\newtheorem{theorem}{Theorem}[section]
\newtheorem{lemma}[theorem]{Lemma}
\newtheorem{prop}[theorem]{Proposition}

\theoremstyle{definition}

\newtheorem{define}[theorem]{Definition}
\crefname{define}{definition}{definitions}

\crefname{assump}{assumption}{assumption}

\theoremstyle{remark}
\newtheorem{rem}[theorem]{Remark}



\newcommand{\bbN}{\mathbb{N}}             
\newcommand{\bbR}{\mathbb{R}}             

\newcommand{\p}{\partial}				
\newcommand{\pO}{\partial\Omega}	

\newcommand{\bx}{\boldsymbol x}
\newcommand{\bn}{\boldsymbol n}


\DeclareMathOperator{\ran}{ran}



\newcommand{\cQ}{\mathcal{Q}}
\newcommand{\cS}{\mathcal{S}}



\begin{document}

\title[Unique continutation for finite elements]{Unique continuation principles for finite-element discretizations of the Laplacian}
\author[G. Cox]{Graham Cox}\email{gcox@mun.ca}
\author[S. MacLachlan]{Scott MacLachlan}\email{smaclachlan@mun.ca}
\author[L. Steeves]{Luke Steeves}\email{luke.steeves@mun.ca}
\address{Department of Mathematics and Statistics, Memorial University of Newfoundland, St. John's, NL A1C 5S7, Canada}

\subjclass{65N25, 65N30, 05C50, 35P15}
\keywords{Unique continuation principle, Cauchy problems, finite-element discretizations}

\begin{abstract}

Unique continuation principles are fundamental properties of elliptic partial differential equations, giving conditions that guarantee that the solution to an elliptic equation must be uniformly zero.  Since finite-element discretizations are a natural tool to help gain understanding into elliptic equations, it is natural to ask if such principles also hold at the discrete level.  In this work,
we prove a version of the unique continuation principle for piecewise-linear and -bilinear finite-element discretizations of the Laplacian eigenvalue problem on polygonal domains in $\mathbb{R}^2$. Namely, we show that any solution to the discretized equation $-\Delta u = \lambda u$ with vanishing Dirichlet and Neumann traces must be identically zero under certain geometric and topological assumptions on the resulting triangulation.  We also provide a counterexample, showing that a nonzero \emph{inner solution} exists when the topological assumptions are not satisfied.  Finally, we give an application to an eigenvalue interlacing problem, where the space of inner solutions makes an explicit appearance.

\end{abstract}

\maketitle

\section{Introduction}

The unique continuation principle (UCP) is a fundamental tool in the theory of elliptic equations. Analogous to the identity theorem in complex analysis, the \emph{weak UCP} says that if a solution to an elliptic partial differential equation (or inequality) vanishes on a non-empty open set, then it vanishes everywhere. 
An important special case is the Laplacian on a domain $\Omega \subset\bbR^d$. If a function $u \in H_{\rm loc}^2(\Omega)$ satisfies a pointwise differential inequality
\begin{equation}
\label{ineq}
	|\Delta u| \leq C ( |u| + |\nabla u|)
\end{equation}
and vanishes in an open ball, then it must be identically zero; see \cite[Theorem~3.8]{Lerner}. The same result holds when the Laplacian in \eqref{ineq} is replaced by a second-order elliptic operator with suitably regular coefficients; see, for instance, \cite[Section~3.5]{Lerner}. An immediate consequence is that no eigenfunction of such an operator can vanish in an open ball.


In two dimensions, this result is due to Carleman \cite{Carleman}; the higher-dimensional version was established by Aronszajn \cite{Aron} and Cordes \cite{Cordes}. In fact, they prove the \emph{strong UCP}, where the hypothesis of vanishing in an open ball is replaced by vanishing to infinite order at a point $\bx_0 \in \Omega$. This means $r^{-n} \int_{B_r(\bx_0)} |u(\bx)|^2\,d\bx \to 0$ as $r \to 0$, for every $n \in \bbN$. It is clear that the strong UCP implies the weak version.

A third variant of the UCP, most relevant for this work, is in terms of \emph{Cauchy data}. Assuming $\Omega$ has sufficiently regular boundary, this says that any solution to \eqref{ineq} satisfying both Dirichlet \emph{and} Neumann boundary conditions, $u\big|_{\pO} = (\p_n u)\big|_{\pO} = 0$,
must be identically zero. It is not hard to see that the Cauchy data UCP follows from the weak UCP, since a solution to \eqref{ineq} with vanishing Cauchy data can be extended by zero to obtain a solution on a larger domain. It follows that a weak solution to the eigenvalue equation, $-\Delta u = \lambda u$, cannot satisfy homogeneous Dirichlet and Neumann boundary conditions simultaneously.

In this paper, we study whether such unique continuation results hold for finite-element discretizations of the Laplacian eigenvalue problem.  Discrete versions of unique continuation have been considered previously in the literature, in particular by Gladwell and Zhu~\cite{GZ02}, but they consider the weak version of unique continuation, rather than the Cauchy data version, as it is more relevant for their task of counting discrete nodal domains. Moreover, they make assumptions on the discretization to ensure that the resulting stiffness matrix (the discrete representation of the Laplacian operator) is an M-matrix (with non-positive off-diagonal entries) in order to establish a maximum principle and results about the number of discrete nodal domains associated with each eigenvector.  We show that our version of UCP can be established under more general conditions.

There are many reasons to consider such a problem. One is that unique continuation (or a lack thereof) plays an important role in spectral theory; see, for instance, \cite{BCLS}. This will also be demonstrated in \Cref{sec:interlacing} with a concrete application.
A more fundamental motivation is that it is important to understand the extent to which a finite-element approximation retains qualitative properties of the continuous problem, since this helps one understand in what ways it is really approximating the original problem. Since finite-element approximation of eigenvalue problems is critical to many engineering applications, such as structural stability analysis~\cite{MR1318690}, better understanding of these properties has a potential impact on many fields. Similar studies have been carried out for discrete versions of the maximum principle \cite{https://doi.org/10.1002/nme.1620200312}.

There is a growing body of literature on finite-element computation of solutions to Cauchy problems when they do exist~\cite{doi:10.1137/17M1163335, BURMAN20191, burman2024optimalfiniteelementapproximation, burman2024infsupstabilityoptimalconvergence}.  Our aims here are complementary to these studies, in that we study the question of when finite-element approximations properly reflect continuum solutions to (homogeneous) Cauchy problems from a linear algebraic lens.  Notably, we show that mesh construction plays a big role in being able to guarantee that there are no ``spurious'' solutions of the discretized problem when the corresponding continuum problem has a unique zero solution.
Specifically, we consider a discrete analogue of unique continuation for piecewise linear and bilinear finite-element discretizations on triangular and quadrilateral meshes, respectively.

We now summarize some of our main results; precise statements will be given in \Cref{sec:UCP}. For simplicity we focus on the Laplacian, but note that many of our key results can be extended to more general elliptic operators, as described in \Cref{rem:mesh}.

For piecewise linear elements on a triangulation of a polygonal domain, we prove unique continuation for Cauchy data under certain geometric and topological assumptions on the mesh. The geometric condition is that all triangles in the mesh have interior angles $\leq \frac\pi2$, which ensures that the corresponding stiffness matrix is an M-matrix. The topological condition is that the boundary nodes in the triangulation form a so-called zero forcing set.

\begin{define}
\label{def:ZFS}
Consider a graph $G = (V,E)$ and a subset $S \subset V$ of vertices that are initially coloured blue, with the remaining vertices white. The colours are then modified according to the following rule: If any blue vertex has exactly one white neighbour, then that neighbour becomes blue. This is repeated until no more colour changes are possible. If every vertex in $V$ ends up blue, we say $S$ is a \emph{zero forcing set (ZFS)}.
\end{define}

This concept has appeared in the literature in many different contexts, for instance quantum control \cite{BG2007}, electrical power networks monitoring  \cite{Haynes} and graph searching \cite{Yang}. The term ``zero forcing" was first introduced in \cite{AIM}, where the \emph{zero forcing number} (size of a smallest zero forcing set) was used to bound the minimal rank of a matrix associated to a given graph. 

The zero forcing condition, along with the M-matrix property, will allow us to sweep through the triangulation, starting from the boundary, to conclude that any solution with vanishing Cauchy data must also vanish in the interior. If the boundary is not zero forcing, we get an upper bound  on the number of linearly independent solutions with vanishing Cauchy data in terms of the \emph{restricted zero forcing number}, which quantifies how far the boundary is from being a zero forcing set; see \Cref{thm:Sbound}.

\Cref{fig:hexes} shows two different meshes of a hexagonal domain.  On the left, the node at $(1,0)$ can be coloured from the adjacent node at $(3\sqrt{3}/2,0)$ on the right-hand boundary.  With that, we can ``sweep'' clockwise around the inner hexagon, colouring each node in turn based on its connections to the boundary, so we conclude that the boundary is a zero forcing set.  When we remove this boundary node, however, we get the mesh shown on the right, for which the boundary is not a zero-forcing set. Every point on the boundary has two interior neighbours, so no colour changes are possible.

\begin{figure}[!tbp]
  \includegraphics[width=0.47\textwidth]{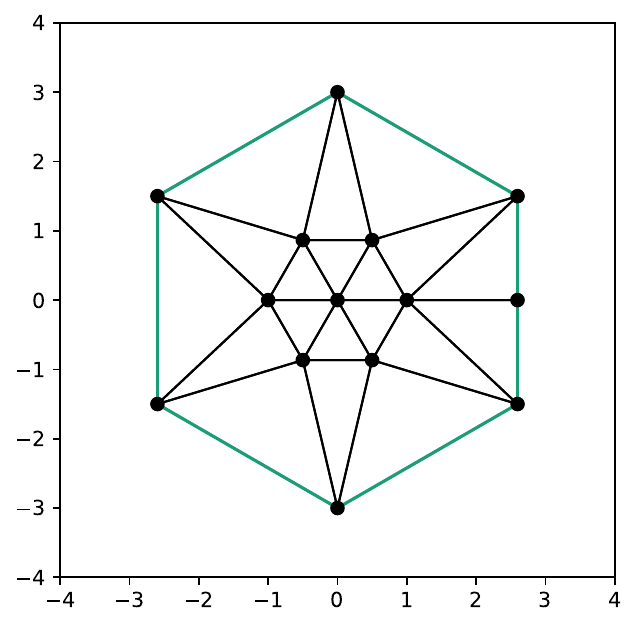}
	\hfill
  \includegraphics[width=0.47\textwidth]{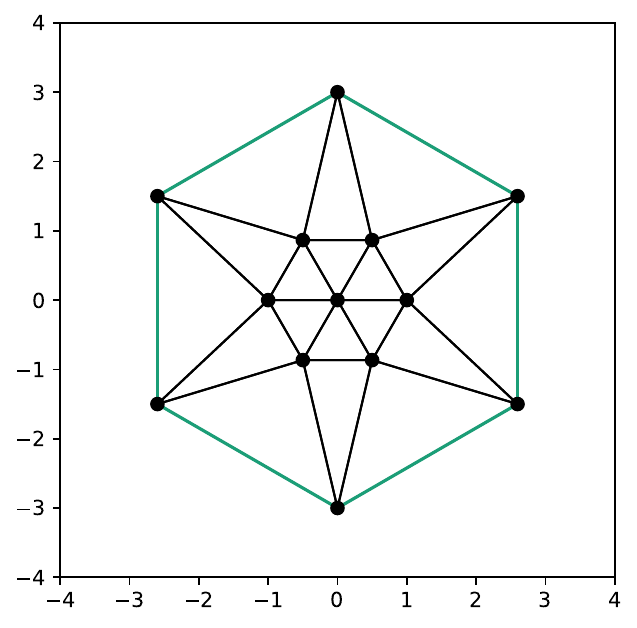}
\caption{Two meshes of a hexagonal domain that show the difference between when the nodes on the boundary (highlighted in green) form a zero-forcing set (at left) and do not (at right).}
\label{fig:hexes}
\end{figure}

For rectangular domains we are able to prove unique continuation for piecewise bilinear elements on any tensor product mesh, with no conditions on the aspect ratios of the individual elements, even though the stiffness matrix is not necessarily an M-matrix in this case; see \Cref{thm:UC_tensorproduct}. This is related to a modified version of zero forcing, discussed in \Cref{sec:rect}, in which only certain edges are used in the forcing process.

These results are illustrated by numerous examples and counterexamples in \Cref{sec:examples}. For instance, we show that the hexagonal mesh on the right of \Cref{fig:hexes} admits a non-trivial solution with vanishing Cauchy data, but this solution disappears under a generic perturbation of the mesh. On the other hand, we show that the analogous mesh of the heptagon \emph{does} satisfy the unique continuation principle, even though its boundary nodes do not form a zero forcing set. We also find an eigenvector on an annular domain that has vanishing Cauchy data on both boundary components, which we then use to construct a solution on a mesh of the hexagon that vanishes on some interior elements.

\subsection*{Outline}
In \Cref{sec:overview}, we recall the finite-element method and introduce the necessary background and preliminary results. In \Cref{sec:UCP}, we describe unique continuation for finite elements, then formulate and prove our main results. \Cref{sec:examples} contains a number of examples and counterexamples, to demonstrate when (and how badly) the UCP can fail. Finally, in \Cref{sec:interlacing}, we show the relevance of unique continuation to spectral theory, obtaining an interlacing result between the discretized Dirichlet and Neumann eigenvalues in which the number of inner solutions appears explicitly.

\section{Overview of the finite-element method}
\label{sec:overview}

Before stating our unique continuation results, we explain how to discretize the Dirichlet and Neumann eigenvalue problems using a Galerkin finite-element method with $H^1(\Omega)$-conforming elements on a mesh of the polygonal domain, $\Omega$. We also define a Dirichlet-to-Neumann map for the discretized problem and describe the piecewise linear and bilinear elements that we will be using.

\subsection{The Neumann problem}
We start by writing the Neumann eigenvalue problem,
\begin{equation}
\label{ctsN}
    -\Delta u = \lambda u \text{ in } \Omega, \qquad \frac{\partial u}{\partial n}  = 0 \text{ on }\partial\Omega,
\end{equation}
in weak form. For any smooth functions $u$ and $v$ on $\Omega$, Green's first identity implies
\begin{equation}
\label{weakcts}
	\int_\Omega \nabla u \cdot \nabla v \,dA = \int_\Omega (-\Delta u)v\,dA + \int_{\partial\Omega} v(\nabla u) \cdot\bn \,ds,
\end{equation}
where $\bn$ is the outward unit normal vector on $\partial\Omega$.
The right-hand side equals $\lambda \int_\Omega uv \,dA$ for arbitrary $v$ if and only if $u$ satisfies \eqref{ctsN}, so we say that a function $u \in H^1(\Omega)$ is a \emph{weak solution} to the Neumann eigenvalue problem if
\begin{equation}
\label{weakN}
	\int_\Omega \nabla u \cdot \nabla v\,dA = \lambda \int_\Omega uv \,dA
\end{equation}
for every $v \in H^1(\Omega)$.

%

Given a finite-dimensional subspace $\mathcal{V}^h \subset H^1(\Omega)$, the weak form of the discretized Neumann eigenvalue problem is to find $u^h \in \mathcal{V}^h$ such that
\begin{equation}
\label{weakh}
	\int_\Omega \nabla u^h \cdot \nabla v^h dA = \lambda \int_\Omega u^h v^h dA \quad  \text{ for all }v^h\in \mathcal{V}^h.
\end{equation}
We fully realize the discrete form of the problem by choosing a basis $\{\phi_i\}$ for $\mathcal{V}^h$, writing $u^h(\bx) = \sum_{i} u_i\phi_i(\bx)$ and defining matrices $A$ and $M$ by
\begin{equation}
  \label{eq:matrix_entries}
    a_{i,j} = \int_\Omega \nabla \phi_j \cdot \nabla \phi_i \,dA, \qquad\qquad
    m_{i,j} = \int_\Omega \phi_j \cdot \phi_i \,dA.
\end{equation}
Writing $u$ as the vector with entries equal to the basis coefficients, the discretized Neumann eigenvalue problem thus corresponds to the generalized matrix eigenvalue problem
\begin{equation}\label{FEM:N}
	Au = \lambda M u.
\end{equation}

\subsection{Interior and boundary degrees of freedom}
  
We next partition the degrees of freedom based on whether $\phi_i(\bx) = 0$ for all $\bx\in\partial\Omega$ or not, calling basis functions that are uniformly zero on $\partial\Omega$ ``interior'' basis functions (and their coefficients ``interior'' degrees of freedom) and the complementary set ``boundary'' basis functions and degrees of freedom. The interior basis functions are precisely those contained in $\mathcal{V}^h \cap H^1_0(\Omega)$.  

To do this, we follow the standard $H^1(\Omega)$-conforming finite-element methodology~\cite{SCBrenner_LRScott_1994a,NumPDEs} to specify the choice of $\mathcal{V}^h$.  
Thus, we consider a nonoverlapping mesh, $\mathcal{T}^h = \left\{ \tau^h_k \right\}$, where each $\tau^h_k$ is either a triangle or quadrilateral, such that $\Omega = \cup_k \tau^h_k$. 
We then define piecewise polynomial spaces on each element. For the triangular case we define an index set $\mathcal{I}_T(p) = \{(\alpha,\beta)~|~\alpha,\beta\in\mathbb{Z}, \ 0 \leq \alpha,\beta\text{ and }\alpha + \beta \leq p \}$ so that
  \begin{equation}
P_p(\tau) = \left\{ v\in C^\infty(\tau)~\middle|~ v(\bx) = \sum_{(\alpha,\beta)\in \mathcal{I}_T(p)} c_{\alpha,\beta} x^\alpha y^\beta \right\}
  \end{equation}
is the space of polynomials with total degree at most $p$ on $\tau$.  For the quadrilateral case, we define $Q_p(\tau)$ similarly, but with $0 \leq \alpha,\beta \leq p$ in the index set $\mathcal{I}_Q(p)$, to get the set of bivariate polynomials of individual degree at most $p$.  We define the corresponding finite-element space, $\mathcal{V}^h$,  as
  \begin{equation}
  \label{Vhdef}
P_p(\mathcal{T}^h) = \left\{ v\in C^0(\Omega)~\middle|~ \left. v\right|_\tau \in P_p(\tau) \text{ for all } \tau \in \mathcal{T}^h \right\}
  \end{equation}
in the triangular case, and similarly for $Q_p(\mathcal{T}^h)$.  By construction we have $\mathcal{V}^h \subset C^0(\Omega)$ and so point evaluation is well defined for functions in $\mathcal{V}^h$.


Standard Lagrange finite elements use Lagrange interpolating polynomials as basis functions on each element with a given set of nodes, $\{\bx_j\}$, with the nodal basis property that $\phi_i(\bx_j) = \delta_{i,j}$.
To generate the $H^1(\Omega)$-conforming spaces described in \eqref{Vhdef}, these nodes are chosen to ensure continuity between elements, leading to the requirement that there be a degree of freedom at each vertex of the mesh (where more than two elements are adjacent to one another) and $p-1$ nodes along each edge of the mesh.  A consequence of this is that some of the nodes, $\bx_j$, must lie on $\partial\Omega$.
This implies that if the node, $\bx_i$, associated with $\phi_i(\bx)$ is not on $\partial\Omega$, then $\phi_i(\bx)$ is an interior basis function.  In this way, we partition the vector $u$ into two pieces: $u_I$, the interior degrees of freedom, and $u_B$, the boundary degrees of freedom.  In what follows, we assume that $A$ and $M$ are permuted so that they can be partitioned similarly:
\begin{equation}
\label{decomp}
	A = \begin{pmatrix} A_{II} & A_{IB} \\ A_{BI} & A_{BB} \end{pmatrix}, \qquad\qquad
	M = \begin{pmatrix} M_{II} & M_{IB} \\ M_{BI} & M_{BB} \end{pmatrix}.
\end{equation}
A similar decomposition is possible for many other common finite-element bases, such as for problems in $H(\text{curl},\Omega)$ or $H(\text{div},\Omega)$ with suitable boundary conditions.

With respect to this decomposition, the discretized Neumann eigenvalue problem \eqref{FEM:N} can be written~
\begin{subequations}
\label{FEM:Neumann} 
\begin{align}
	A_{II} u_I + A_{IB} u_B &= \lambda (M_{II} u_I + M_{IB} u_B ), \label{FEM:Robin1} \\
	A_{BI} u_I + A_{BB} u_B &= \lambda (M_{BI} u_I + M_{BB} u_B ). \label{FEM:Robin2}
\end{align}
\end{subequations}
We will see below how \eqref{FEM:Robin2} can be interpreted as the Neumann boundary condition, whereas \eqref{FEM:Robin1} corresponds to the differential equation $-\Delta u = \lambda u$ with no boundary conditions imposed.

\subsection{The Dirichlet problem}
The weak form of the Dirichlet problem,
\begin{equation}
\label{ctsD}
    -\Delta u = \lambda u \text{ in } \Omega, \qquad u  = 0 \text{ on }\partial\Omega,
\end{equation}
is to find $u \in H^1_0(\Omega)$ such that \eqref{weakN} holds for all $v \in H^1_0(\Omega)$. The discretized problem is thus to find $u^h \in \mathcal{V}^h \cap H^1_0(\Omega)$ such that \eqref{weakh} holds for all $v^h \in \mathcal{V}^h \cap H^1_0(\Omega)$.

In terms of basis elements, this means $u = (u_I,0)$ should satisfy $\left<Au,v\right> = \lambda \left<u,v\right>$ for all $v = (v_I,0)$, which is equivalent to
\begin{equation}
\label{FEM:Dirichlet}
	A_{II} u_I = \lambda M_{II} u_I.
\end{equation}
Note that this differs from the Neumann system in~\eqref{FEM:Neumann} in two ways, by setting $u_B = 0$ in~\eqref{FEM:Robin1} and by eliminating~\eqref{FEM:Robin2} since $v_B = 0$. Thus, it is a weaker formulation than solving~\eqref{FEM:Neumann} with $u_B = 0$, since this also involves the boundary equation~\eqref{FEM:Robin2}.

\subsection{The Dirichlet-to-Neumann map}

We finally define a discrete analogue of the Dirichlet-to-Neumann map, which will be useful later. In the continuous setting this is defined as follows, assuming $\lambda$ is not a Dirichlet eigenvalue. For any function $f$ on $\pO$, there is a unique solution to the boundary value problem
\begin{equation}
    -\Delta u = \lambda u \text{ in } \Omega, \qquad
	u  = f \text{ on }\partial\Omega.
\end{equation}
The Dirichlet-to-Neumann map (at energy $\lambda$) then maps $f$ to the normal derivative $\frac{\p u}{\p n}$ of this unique solution. The weak formulation of this map is given in  \cite[Section~2]{AM12}, along with its relevant analytic properties, 

To define a discrete analogue of this, we need to understand what it means to solve a discrete, inhomogeneous Dirichlet boundary value problem, and what it means to take the ``normal derivative" of this solution.

From Green's identity \eqref{weakcts}, we see that a smooth function $u$ satisfies the differential equation $-\Delta u = \lambda u$ (with no boundary conditions imposed) if and only if \eqref{weakN} holds for all compactly supported $v$. The discrete analogue of this equation is that $u = (u_I, u_B)$ satisfies $\left<Au,v\right> = \lambda \left<u,v\right>$ for all $v = (v_I,0) \in \bbR^{n_I} \oplus \{0\}$, yielding
\begin{equation}
\label{FEM:weak2}
	A_{II} u_I + A_{IB} u_B = \lambda (M_{II} u_I + M_{IB} u_B ).
\end{equation}

Note that the discrete Dirichlet problem \eqref{FEM:Dirichlet} is precisely \eqref{FEM:weak2} plus the boundary condition $u_B = 0$. Similarly, the Neumann problem \eqref{FEM:N} consists of \eqref{FEM:Robin1} (which is the same as \eqref{FEM:weak2}) and \eqref{FEM:Robin2}, so the latter equation can be interpreted as a Neumann boundary condition.

We will identify the actual value of the ``normal derivative" by comparison with the equation
\begin{equation}
\label{weakcts2}
	\int_\Omega \big(\nabla u \cdot \nabla v - \lambda uv\big) \,dA = \int_{\partial\Omega} v(\nabla u) \cdot\bn \,ds,
\end{equation}
which is valid for any function $u$ that satisfies $-\Delta u = \lambda u$ and arbitrary $v$. Recall that the discrete analogue of the differential equation $-\Delta u = \lambda u$ (with no boundary conditions) is \eqref{FEM:weak2}. Assuming $\lambda$ is not a (discrete) Dirichlet eigenvalue, we can solve this for $u_I$ to get
%
%
\begin{equation}
	u_I = (A_{II} - \lambda M_{II})^{-1} (\lambda M_{IB} - A_{IB}) u_B.
\end{equation}
Using this, we find that
\begin{align*}
	\left< (A - \lambda M)u, v \right> &= \left< (A_{BI} - \lambda M_{BI}) u_I 
	+ (A_{BB} - \lambda M_{BB}) u_B, v_B\right> \\
	&= \left< (A_{BB} - \lambda M_{BB})u_B - (A_{BI} - \lambda M_{BI})(A_{II} - \lambda M_{II})^{-1} (\lambda M_{IB} - A_{IB}) u_B, v_B \right>
\end{align*}
for any $v =(v_I,v_B)$. Comparing with the right-hand side of \eqref{weakcts2}, we identify the discrete normal derivative of $u$ with the vector
\begin{equation}
\label{Ntrace}
	(A_{BB} - \lambda M_{BB})u_B - (A_{BI} - \lambda M_{BI})(A_{II} - \lambda M_{II})^{-1} (A_{IB} - \lambda M_{IB}) u_B.
\end{equation}
Note that this vanishes if and only if \eqref{FEM:Robin2} holds.


%
%
The discrete version of the Dirichlet-to-Neumann map should thus map $u_B$ to the normal derivative defined in \eqref{Ntrace}, so it is given by
\begin{equation}
\label{DtN1}
	\Lambda(\lambda) = (A_{BB} - \lambda M_{BB}) - (A_{BI} - \lambda M_{BI}) (A_{II} - \lambda M_{II} )^{-1}  (A_{IB} - \lambda M_{IB}).
\end{equation}
In \Cref{ssec:DtN} we will explain how this definition should be modified when $\lambda$ is a Dirichlet eigenvalue.

The expression in~\eqref{DtN1} is the same as that considered in the domain decomposition literature (see, for example, \cite{AL16}). It is not, however, the only possible way to discretize the Dirichlet-to-Neumann map.  A more direct discretization is to simply evaluate the bilinear form for the Dirichlet-to-Neumann map on a basis of piecewise linear functions defined on a mesh of $\partial\Omega$. That is, letting $\{\hat\phi_i(\bx)\}$ denote the boundary basis functions, and letting $u_i$ denote the solution to the boundary value problem
\begin{equation}
\label{BVP}
    -\Delta u_i = \lambda u_i \text{ in } \Omega, \qquad
	u_i  = \hat\phi_i \text{ on }\partial\Omega,
\end{equation}
we would obtain an $n_B \times n_B$ matrix with entries
\begin{equation}
\label{DtN2}
	\int_\Omega \big( \nabla u_i \cdot \nabla u_j - \lambda u_i u_j \big)\,dA.
\end{equation}
This is not practical, however, since the matrix elements are defined into terms of solutions of the partial differential equation \eqref{BVP}, which cannot be evaluated explicitly.

In short, \eqref{DtN2} is the discretization of the Dirichlet-to-Neumann map, whereas \eqref{DtN1} is the Dirichlet-to-Neumann map for the discretized problem.

\subsection{Finite-element spaces}
\label{sec:FEMspaces}

For the remainder of the paper, we consider two finite-element spaces, the piecewise linear $P_1(\Omega)$ space on triangular meshes and the piecewise bilinear $Q_1(\Omega)$ space on quadrilateral meshes.  Here, we consider the case where $\Omega$ is polygonal, so that it can be covered by a mesh (either of triangular or quadrilateral elements).  When $\Omega$ has curved boundaries, this necessitates approximating $\Omega$ by a suitable polygon (typically by choosing an appropriate number of nodes on $\partial\Omega$ to define the mesh, and connecting these with straight edges).  In both cases, the discretization assumes a valid mesh is given, by a set of nodes, $\{\bx_j\}$, connected by straight edges, with no crossing edges and no degenerate (zero-volume) elements.  In the triangular case, we assume that each element is formed by three nodes and three edges, while quadrilateral meshes assume that each element is formed by four nodes and four edges.

For each node in the mesh, $\bx_j$, we define a corresponding basis function, $\phi_j$, that is nonzero on each element adjacent to $\bx_j$, but zero on all other elements.  For the $P_1(\Omega)$ case, for each triangle, $T$, adjacent to $\bx_j$, we label the other two vertices of $T$ as $\bx_k$ and $\bx_\ell$, and define the restriction of $\phi_j(\bx)$ to $T$ as
\begin{equation}\label{eq:P1basis_T}
	\phi_j(\bx)\big|_T = \frac{(x_k-x)(y_{\ell}-y) - (x_{\ell}-x)(y_k-y)}{(x_k-x_j)(y_{\ell}-y_j) - (x_{\ell}-x_j)(y_k-y_j)},
\end{equation}
where we write $\bx = (x,y)$ with similar notation for $\bx_j$, $\bx_k$, and $\bx_\ell$.  We note that $\phi_j(\bx)$ clearly satisfies the nodal basis property by its construction, and that it can also be easily rewritten to verify that it is a linear function over $T$.  Restricting $\phi_j(\bx)$ to an edge of $T$, writing $\bx = t\bx_j + (1-t)\bx_k$ for $0 \leq t \leq 1$, it is a straightforward calculation to verify that $\phi_j(t\bx_j + (1-t)\bx_k) = t$, which allows us to verify that using the elementwise definition of $\phi_j(\bx)$ in~\eqref{eq:P1basis_T} leads to basis functions that are continuous across element edges.
A similar expression for basis functions can be derived in the quadrilateral case with identical continuity properties.

\section{Unique continuation}
\label{sec:UCP}

Given a mesh of the polygonal domain, $\Omega$, and a Lagrange finite-element discretization on that mesh, we define the set of \emph{inner solutions} of the discretization to be
\begin{equation}
\label{inner}
	\cS_{\rm in}(\lambda) = \ker(A_{II} - \lambda M_{II})  \cap \ker(A_{BI} - \lambda M_{BI}) \subset \bbR^{n_I}
\end{equation}
for $\lambda \in \bbR$. This is the set of $u_I \in \bbR^{n_I}$ such that $(u_I,0)$ satisfies \eqref{FEM:Neumann}, therefore $\cS_{\rm in}(\lambda)$ is non-trivial if and only if there exists a nonzero vector $u = (u_I,0)$ that solves the Dirichlet and Neumann eigenvalue problems simultaneously.  As a result, if $\lambda$ is not a Dirichlet eigenvalue, we automatically have $\cS_{\rm in}(\lambda) = \{0\}$, since in that case $\ker(A_{II} - \lambda M_{II}) = \{0\}$. In particular, this lets us conclude that $\cS_{\rm in}(\lambda)$ is trivial whenever $\lambda < \lambda_1$, the smallest Dirichlet eigenvalue, so we can then focus our attention on $\cS_{\rm in}(\lambda)$ for $\lambda \geq \lambda_1 > 0$.


\begin{define}
\label{def:UCP}
We say that a finite-element discretization on a given mesh satisfies the \emph{unique continuation principle} if $\cS_{\rm in}(\lambda) = \{0\}$ for all $\lambda \in \bbR$.
\end{define}

From here forward, we will restrict our attention to the lowest-order finite-element spaces on triangular meshes (the piecewise linear $P_1(\Omega)$ space) and quadrilateral meshes (the piecewise bilinear $Q_1(\Omega)$ space).  Both of these spaces are $H^1(\Omega)$ conforming, requiring continuity of the basis functions across element edges.
As we will establish below, the piecewise linear finite-element discretization on the left-hand mesh in Figure~\ref{fig:hexes} can be shown to satisfy the UCP, while the same discretization on the right-hand mesh has a nontrivial inner solution.  In Section~\ref{sec:examples}, we discuss some other interesting (counter-)examples.

\subsection{The graph associated to a mesh}
\label{sec:graph}
To any mesh, we associate a graph $G = (V,E)$ with vertex set $V = \{\bx_i\}_{i=1}^n$ and edge set $E = \{(\bx_i,\bx_j) : i \neq j \text{ and } m_{i,j} > 0 \}$. Since the basis functions $\phi_i$ are nonnegative, this means $\bx_i$ and $\bx_j$ are connected by an edge if and only $\phi_i(\bx) \phi_j(\bx)$ is not identically zero.

For a triangular mesh we have $m_{i,j} \neq 0$ if and only if $\bx_i$ and $\bx_j$ are nodes of a common simplex, so the graph of the mesh consists precisely of the nodes and edges of the triangulation itself. On the other hand, the graph of a quadrilateral mesh will contain additional edges between the corners of each element, as shown in \Cref{fig:rectangle}.

The \emph{closed neighbourhood of $\bx_i$} in $G$, denoted $N[\bx_i]$, consists of $\bx_i$ and all vertices adjacent to $\bx_i$. We emphasize that ``adjacent" is meant with respect to the edges in the graph $G$, not just the edges in the original mesh.

\subsection{Triangular meshes}
 The following lemma is the main ingredient in all of our unique continuation arguments.

\begin{lemma}
\label{lem:step}
Let $u$ satisfy $Au = \lambda Mu$ for some $\lambda \in \bbR$.
Suppose $u$ vanishes in $N[\bx_i] \backslash \{\bx_j\}$ for some $j \neq i$. If $a_{i,j} - \lambda m_{i,j} \neq 0$, then $u$ vanishes at $\bx_j$.
\end{lemma}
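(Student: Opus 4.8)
The plan is to read off the $i$th row of the generalized eigenvalue equation $Au = \lambda Mu$ and then exploit the sparsity that both $A$ and $M$ inherit from the local supports of the nodal basis functions. Writing the $i$th component of $Au = \lambda Mu$ gives
\[
	\sum_k (a_{i,k} - \lambda m_{i,k}) u_k = 0,
\]
where $u_k$ is the $k$th entry of the coefficient vector (the value of $u$ at node $\bx_k$). The entire argument comes down to showing that, under the stated hypotheses, exactly one term in this sum survives.

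First I would argue that the sum effectively ranges only over $\bx_k \in N[\bx_i]$. Indeed, if $k \neq i$ and $\bx_k \notin N[\bx_i]$, then by the graph definition in \Cref{sec:graph} there is no edge from $\bx_i$ to $\bx_k$, so $m_{i,k} = 0$. Since the basis functions are nonnegative, $m_{i,k} = \int_\Omega \phi_i \phi_k \, dA = 0$ forces $\phi_i \phi_k \equiv 0$ almost everywhere, so $\mathrm{supp}(\phi_i) \cap \mathrm{supp}(\phi_k)$ has measure zero. Because both $\phi_i \phi_k$ and $\nabla \phi_i \cdot \nabla \phi_k$ are supported on this intersection, we also get $a_{i,k} = \int_\Omega \nabla \phi_i \cdot \nabla \phi_k \, dA = 0$. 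Hence $a_{i,k} - \lambda m_{i,k} = 0$ for every such $k$, and these terms drop out irrespective of the value of $u_k$.

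Next I would use the vanishing hypothesis to collapse what remains. By assumption $u_k = 0$ for every $\bx_k \in N[\bx_i] \setminus \{\bx_j\}$; in particular $u_i = 0$, since $\bx_i \in N[\bx_i]$. Thus the only index inside $N[\bx_i]$ that can contribute a nonzero $u_k$ is $k = j$, so the sum reduces to
\[
	(a_{i,j} - \lambda m_{i,j}) u_j = 0.
\]
Since $a_{i,j} - \lambda m_{i,j} \neq 0$ by hypothesis, we conclude $u_j = 0$, that is, $u$ vanishes at $\bx_j$.

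The only genuine subtlety — and the step I would be most careful to justify — is the claim that the off-diagonal nonzero pattern of the stiffness matrix $A$ is contained in that of the mass matrix $M$, which is precisely what licenses restricting the sum to the graph neighbourhood $N[\bx_i]$, a graph defined purely through $M$. This rests on the two facts above: that both bilinear forms are supported on the geometric overlap of the basis functions' supports, and that the nodal basis is nonnegative (so that a vanishing mass entry genuinely signals disjoint supports rather than a cancellation). Once that containment is established, the remainder is a one-line linear-algebra computation. I would also remark that the argument never invokes the triangular structure, so the same lemma applies verbatim in the quadrilateral case, provided the graph $G$ is taken as in \Cref{sec:graph} (i.e.\ via $M$, including the extra diagonal edges of each quadrilateral).
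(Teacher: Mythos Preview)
Your proof is correct and follows exactly the same approach as the paper: examine the $i$th row of $(A-\lambda M)u=0$, observe that only the $(i,j)$ entry survives, and conclude $u_j=0$. You are simply more explicit than the paper about why non-neighbours contribute nothing (the support argument showing $m_{i,k}=0 \Rightarrow a_{i,k}=0$), a point the paper leaves implicit.
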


In other words, if we know that a solution $u$ vanishes at $\bx_i$ and at all but one of the nodes adjacent to $\bx_i$, then the linear equation $Au = \lambda Mu$ forces $u$ to also vanish at that node. This property is the origin of the ``colour change rule" or ``forcing rule" from \Cref{def:ZFS}.

\begin{proof}
Since $\bx_j$ is the only neighbour of $\bx_i$ at which $u$ is potentially nonzero, the $i$th row of the equation $(A - \lambda M)u = 0$ reduces to
\begin{equation}
	0 = [(A - \lambda M)u]_i = (a_{i,j} - \lambda m_{i,j}) u_j,
\end{equation}
therefore $u_j = 0$ if $a_{i,j} - \lambda m_{i,j} \neq 0$.
\end{proof}

In general it is difficult to verify the condition $a_{i,j} - \lambda m_{i,j} \neq 0$ for arbitrary  $i,j$ and $\lambda$. However, since $m_{i,j} > 0$ whenever $\bx_i, \bx_j$ are neighbours and we only need to consider $\lambda > 0$, it suffices to have $a_{i,j} \leq 0$. This is not true for an arbitrary mesh, but it does hold for a triangular mesh provided none of the interior angles exceed $\pi/2$.

\begin{lemma}\label{thm:P1_Mmatrix}
If every internal angle, $\alpha$, of every simplex element in the mesh satisfies $0 < \alpha \leq \frac{\pi}{2}$, then every off-diagonal entry of the $P_1(\Omega)$ finite-element stiffness matrix, $A$, is non-positive.
\end{lemma}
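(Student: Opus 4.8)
The plan is to prove the formula for an off-diagonal stiffness entry $a_{i,j}$ in terms of the geometry of the two triangles sharing the edge $\bx_i\bx_j$, and then read off its sign. First I would observe that, by the elementwise definition of the stiffness matrix in \eqref{eq:matrix_entries}, the entry $a_{i,j} = \int_\Omega \nabla\phi_j\cdot\nabla\phi_i\,dA$ receives contributions only from the (at most two) triangles containing the edge $\bx_i\bx_j$, since $\phi_i$ and $\phi_j$ are simultaneously nonzero only there. So it suffices to compute the contribution $\int_T \nabla\phi_j\cdot\nabla\phi_i\,dA$ from a single triangle $T = \triangle \bx_i\bx_j\bx_k$ and show it is non-positive when the angle of $T$ at the opposite vertex $\bx_k$ is at most $\pi/2$.

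The key computation is that on a single triangle the hat-function gradients are constant, so $\int_T\nabla\phi_j\cdot\nabla\phi_i\,dA = |T|\,\nabla\phi_i\cdot\nabla\phi_j$, where $|T|$ is the area. The gradient $\nabla\phi_i$ is a constant vector perpendicular to the edge opposite $\bx_i$ (that is, the edge $\bx_j\bx_k$), pointing into the triangle, with magnitude $1/h_i$ where $h_i$ is the distance from $\bx_i$ to the line through $\bx_j\bx_k$; similarly for $\nabla\phi_j$. The cleanest route is to express this dot product directly via these edge-normal vectors: writing the edge vectors of $T$ and using $|T| = \tfrac12(\text{base})(\text{height})$, a short calculation yields the classical cotangent identity
\begin{equation}
\int_T \nabla\phi_j\cdot\nabla\phi_i\,dA = -\tfrac12 \cot\q_k,
\end{equation}
where $\q_k$ is the interior angle of $T$ at the third vertex $\bx_k$. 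I would derive this by choosing convenient coordinates (placing the edge $\bx_i\bx_j$ conveniently, or computing $\nabla\phi_i\cdot\nabla\phi_j$ as $(\cos\q_k)/(h_ih_j)$ times appropriate factors) rather than manipulating the determinant form of \eqref{eq:P1basis_T} blindly.

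With the cotangent formula in hand, the conclusion is immediate: summing over the one or two triangles adjacent to the edge gives $a_{i,j} = -\tfrac12(\cot\q_k + \cot\q_{k'})$, and each angle $\q_k,\q_{k'}$ lies in $(0,\tfrac\pi2]$ by hypothesis, so each cotangent is non-negative and hence $a_{i,j}\le 0$. If $\bx_i$ and $\bx_j$ are not joined by an edge of the mesh then $a_{i,j}=0$ trivially. I expect the only real obstacle to be the bookkeeping in the gradient computation—keeping track of signs and of which angle is opposite the shared edge—since the final sign hinges entirely on the angle at $\bx_k$ being the \emph{opposite} angle, not one of the angles at $\bx_i$ or $\bx_j$. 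Once the single-triangle identity is established correctly, the sign analysis and the reduction to edge-adjacent triangles are both routine.
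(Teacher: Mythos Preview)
Your argument is correct: the cotangent identity $\int_T \nabla\phi_i\cdot\nabla\phi_j\,dA = -\tfrac12\cot\q_k$ is the standard elemental computation, and the sign conclusion follows immediately from the angle hypothesis. The paper does not actually supply a proof of this lemma---it simply cites an external reference (\cite[Theorem~1]{MR3216818})---so your self-contained derivation via the cotangent formula is precisely the standard argument that reference would contain, and in that sense your approach and the paper's (implicit) approach coincide.
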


\begin{proof}
See~\cite[Theorem 1]{MR3216818}.
\end{proof}

\begin{rem}
\label{rem:mesh}
We note that~\cite{MR3216818} and much of the related literature states results such as \Cref{thm:P1_Mmatrix} in terms of {\it M-matrices} (see~\cite{MR1298430, RSVarga_2000} for a definition and more details).  However, non-positive off-diagonals are a direct consequence of being an M-matrix, so we state the obvious corollary above as \Cref{thm:P1_Mmatrix}, since this is what we need below. We also note that \cite[Theorem 1]{MR3216818} gives conditions on the mesh that lead to M-matrix structure for more general elliptic operators and hence enables generalization of  our results to such cases.
\end{rem}

\Cref{thm:P1_Mmatrix} implies the following result for triangular meshes.

\begin{prop}\label{thmZFS_triangular}
Consider a simplex mesh of a polygonal domain $\Omega$, with every internal angle of every simplex element in the mesh no greater than $\pi/2$, and let $G = (V,E)$ be the associated graph. If $S \subset V$ is a zero forcing set for $G$ and $\lambda>0$, then any solution to $Au = \lambda Mu$ that vanishes on $S$ is identically zero.
\end{prop}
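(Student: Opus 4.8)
The plan is to run the zero forcing colour-change process on $G$ and to interpret ``blue'' as ``$u$ is known to vanish here,'' so that each forcing step is realised analytically by \Cref{lem:step}. First I would set up the induction: colour the vertices of $S$ blue initially, which is consistent with the hypothesis that $u$ vanishes on $S$, and then argue by induction on the number of forcing steps that $u$ vanishes at every vertex that has been coloured blue.

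For the inductive step, suppose the white vertex $\bx_j$ is forced blue by the blue vertex $\bx_i$. By the colour-change rule of \Cref{def:ZFS}, $\bx_j$ is the \emph{unique} white neighbour of $\bx_i$ in $G$; equivalently, $\bx_i$ and every neighbour of $\bx_i$ other than $\bx_j$ are already blue. By the inductive hypothesis $u$ then vanishes on all of $N[\bx_i]\setminus\{\bx_j\}$, which is exactly the standing assumption of \Cref{lem:step}.

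To invoke that lemma I must check the nondegeneracy condition $a_{i,j}-\lambda m_{i,j}\neq 0$, and this is the only place where the hypotheses of the proposition do real work. Since $\bx_i$ and $\bx_j$ are adjacent in $G$, the definition of the associated graph gives $m_{i,j}>0$, and $\lambda>0$ by assumption, so $\lambda m_{i,j}>0$. The angle condition $0<\alpha\le \pi/2$ lets me apply \Cref{thm:P1_Mmatrix} to conclude $a_{i,j}\le 0$. Hence $a_{i,j}-\lambda m_{i,j}<0$, so \Cref{lem:step} applies and forces $u_j=0$; the newly blue vertex is indeed a zero of $u$, completing the induction.

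Finally, since $S$ is a zero forcing set, the process terminates with every vertex of $V$ coloured blue, so the induction yields $u=0$ at every node, i.e. $u$ is identically zero. I expect no genuine obstacle beyond the sign check in the previous paragraph; the remainder is a formal identification of the forcing rule with \Cref{lem:step}. It is worth emphasising \emph{why} a bare sign bound is enough: because $a_{i,j}$ and $\lambda m_{i,j}$ have strictly opposite signs, their difference can never vanish, so no quantitative control of $\lambda$ relative to the magnitudes of the matrix entries is required—this is precisely the advantage over arguments that demand the full M-matrix machinery.
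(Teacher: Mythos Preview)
Your proposal is correct and follows essentially the same approach as the paper: interpret ``blue'' as ``$u$ vanishes here,'' then induct on forcing steps, using \Cref{lem:step} at each step with the nondegeneracy condition $a_{i,j}-\lambda m_{i,j}<0$ guaranteed by \Cref{thm:P1_Mmatrix}, $m_{i,j}>0$, and $\lambda>0$. Your write-up is in fact more explicit than the paper's about the sign check, which the paper handles in the surrounding discussion rather than inside the proof itself.
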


\begin{proof}
  By assumption $u$ vanishes at all nodes in $S$. If one of these nodes has exactly one neighbour in $V\backslash S$, then $u$ also vanishes there, by \Cref{lem:step}. The fact that $S$ is a zero forcing set ensures that there is such a node in $V\backslash S$ (see~\Cref{def:ZFS}) and allows us to repeat this argument inductively to conclude that $u$ vanishes everywhere.
\end{proof}

\begin{rem}
This result can easily be deduced from \cite[Proposition~2.3]{AIM}, since the angle condition guarantees that $a_{i,j} - \lambda m_{i,j} \neq 0$ if and only if $i,j$ are neighbours, which means $G$ is the graph of $A - \lambda M$. However, for our discussion of rectangular meshes in the next section, it is important to isolate the step in \Cref{lem:step}, since we cannot directly apply the result from \cite{AIM} to that case.
\end{rem}

Using \Cref{thmZFS_triangular}, we can bound the number of linearly independent inner solutions in terms of the structure of the graph $G$ associated to the mesh.
%

\begin{theorem}
\label{thm:Sbound}
Given a simplex mesh of a polygonal domain $\Omega$, define $\cS_{\rm in}(\lambda)$ by \eqref{inner} for the  $P_1(\Omega)$ finite-element discretization. If every internal angle of every simplex element in the mesh does not exceed $\pi/2$, then
\begin{equation}
\label{minbound}
	\dim \cS_{\rm in}(\lambda) \leq \min \big\{ |S_I| : S_I \cup B \text{ is a zero forcing set for $G$} \big\},
\end{equation}
where $B$ is the set of boundary nodes. In particular, if $B$ is a zero forcing set, then the unique continuation principle holds for this discretization.
\end{theorem}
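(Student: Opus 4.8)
The plan is to realize the right-hand side of \eqref{minbound} as a bound coming from an injective restriction map, with the zero-forcing machinery of \Cref{thmZFS_triangular} supplying the injectivity. First I would dispose of the trivial range of $\lambda$: as already noted, $\cS_{\rm in}(\lambda) = \{0\}$ whenever $\lambda < \lambda_1$, and since $\lambda_1 > 0$ this covers all $\lambda \leq 0$, so the inequality holds vacuously there. It therefore suffices to treat $\lambda > 0$, which is precisely the regime in which the angle hypothesis, through \Cref{lem:step}, makes forcing along graph edges valid.

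Next, fix any interior set $S_I$ for which $S_I \cup B$ is a zero forcing set for $G$, where $B$ is the set of boundary nodes. The key observation is that every inner solution already vanishes on $B$ by construction: if $u_I \in \cS_{\rm in}(\lambda)$ then $u = (u_I, 0)$ satisfies the full system $Au = \lambda Mu$ (both \eqref{FEM:Robin1} and \eqref{FEM:Robin2}), and the component $u_B = 0$ says exactly that $u$ vanishes on $B$. I would then introduce the linear restriction map
\[
	R \colon \cS_{\rm in}(\lambda) \to \bbR^{|S_I|}, \qquad R(u_I) = (u_i)_{i \in S_I},
\]
which records the values of an inner solution on $S_I$.

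The heart of the argument is that $R$ is injective. Suppose $u_I \in \ker R$, so that $u_I$ vanishes on $S_I$. Then the corresponding full solution $u = (u_I,0)$ vanishes on all of $S_I \cup B$. Since $S_I \cup B$ is a zero forcing set and $\lambda > 0$, \Cref{thmZFS_triangular} forces $u \equiv 0$, hence $u_I = 0$. Injectivity of $R$ then gives $\dim \cS_{\rm in}(\lambda) \leq |S_I|$, and taking the minimum over all admissible $S_I$ yields \eqref{minbound}. The ``in particular'' statement is the special case $S_I = \emptyset$: if $B$ is itself a zero forcing set, the bound reads $\dim \cS_{\rm in}(\lambda) \leq 0$ for every $\lambda$, which is exactly the unique continuation principle of \Cref{def:UCP}.

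I do not anticipate a serious technical obstacle, since the substantive content is packaged into \Cref{thmZFS_triangular}; what remains is the linear-algebraic step of converting ``vanishing on $S_I$ forces vanishing everywhere'' into a dimension bound via an injective map. The one point demanding care is the reduction to $\lambda > 0$, because \Cref{thmZFS_triangular} (and the underlying \Cref{lem:step}) rely on strict positivity of $\lambda$ to guarantee $a_{i,j} - \lambda m_{i,j} \neq 0$ along mesh edges; this is precisely why the vanishing of $\cS_{\rm in}(\lambda)$ below $\lambda_1$ must be invoked to handle the excluded range.
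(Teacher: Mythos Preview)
Your proposal is correct and follows essentially the same approach as the paper: both arguments reduce to showing that an inner solution vanishing on $S_I$ (and automatically on $B$) must be identically zero by \Cref{thmZFS_triangular}, and then convert this into a dimension bound. You phrase it as injectivity of a restriction map $R\colon \cS_{\rm in}(\lambda)\to\bbR^{|S_I|}$, while the paper argues by contradiction (if $\dim\cS_{\rm in}(\lambda)>k$, take a nontrivial linear combination of $k+1$ independent inner solutions that vanishes on the $k$ points of $S_*$); these are equivalent formulations of the same linear-algebraic step.
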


In other words, the dimension of $\cS_{\rm in}(\lambda)$ is bounded above by the smallest number of interior nodes that must be added to the boundary to obtain a zero forcing set. For instance, the triangulation shown at the right of~\Cref{fig:hexes} has $\dim \cS_{\rm in}(\lambda) \leq 1$ for any $\lambda>0$, since we can add any one of the nodes on the inner hexagon to $B$ to obtain a zero-forcing set. In \Cref{sec:examples}, we will see that this bound is sharp, in the sense there exists $\lambda_*>0$ for which $\dim \cS_{\rm in}(\lambda_*) = 1$.

The proof is similar to that of \cite[Proposition~2.4]{AIM}, where the zero forcing number is used to bound the maximum nullity of a graph.

\begin{proof}
Let $k = \min \big\{ |S_I| : S_I \cup B \text{ is a zero forcing set for $G$} \big\}$ be achieved by some set $S_*$. If $\dim \cS_{\rm in}(\lambda) > k$, then there exist at least $k+1$ linearly independent solutions to $Au = \lambda Mu$ that all vanish on $B$. We can then construct a nontrivial linear combination that vanishes at each of the $k$ points in $S_*$. By linearity this combination also vanishes on $B$, so we have a nontrivial vector that vanishes on the zero forcing set $S_* \cup B$, which contradicts \Cref{thmZFS_triangular}.
\end{proof}

Note that \eqref{minbound} can be reformulated as $\dim \cS_{\rm in}(\lambda) \leq Z(G;B) - |B|$, where
\begin{equation}
	Z(G;B) = \min \{ |S| : S \text{ is a zero forcing set that contains $B$} \}
\end{equation}
is the \emph{restricted zero forcing number of $G$ subject to $B$}, as defined in \cite{Bozeman}.

\subsection{Rectangular meshes and leaky forcing}
\label{sec:rect}

The case of a rectangular mesh is more interesting, since the stiffness matrix does not necessarily have $a_{i,j} \leq 0$ for all adjacent vertices. In particular, the signs of the entries $a_{i,j}$ for the horizontal and vertical connections in \Cref{fig:rectangle} depend on the aspect ratios of the elements in the mesh; see \cite{https://doi.org/10.1002/nme.1620200312} for details. This means we cannot apply \Cref{lem:step}, since there is no guarantee that $a_{i,j} - \lambda m_{i,j} \neq 0$ for arbitrary $\lambda > 0$. (Equivalently, the hypotheses of \cite[Proposition~2.3]{AIM} cannot be verified.)

Nonetheless, we are able to prove unique continuation for the $Q_1(\Omega)$ discretization on tensor-product meshes, with no conditions on the aspect ratios of the elements. Moreover, we only require the solution to vanish on half the boundary, as shown in \Cref{fig:rectangle}. This works because we only need to force along the diagonal edges between opposing corners of each element, and for these edges we have $a_{i,j} < 0$. The horizontal and vertical edges, where $a_{i,j}$ is not guaranteed to be negative, and hence $a_{i,j} - \lambda m_{i,j}$ may vanish, are not needed for the forcing.

After stating and proving our result for such meshes, we will elaborate on this phenomenon and relate it to the concept of ``edge-leaky forcing" introduced in \cite{leaky}.

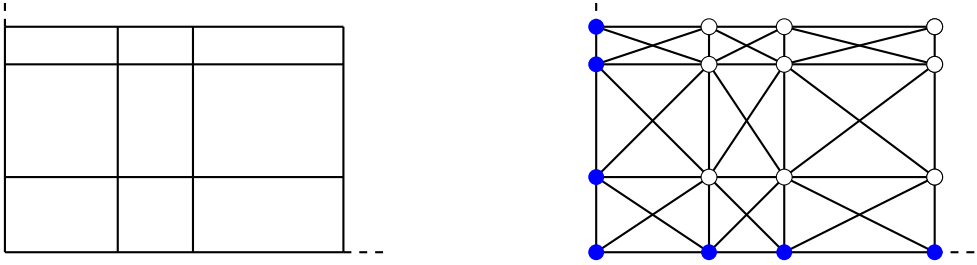
\begin{figure}[!tbp]
\begin{subfigure}[b]{0.4\textwidth}
	\begin{tikzpicture}
		\fill[white] (0,0) circle[radius=3pt]; 
		\draw[thick] (0,1) -- (4.5,1);
		\draw[thick] (0,2.5) -- (4.5,2.5);
		\draw[thick] (0,3) -- (4.5,3);
		\draw[thick] (1.5,0) -- (1.5,3);
		\draw[thick] (2.5,0) -- (2.5,3);
		\draw[thick] (4.5,0) -- (4.5,3);
		\draw[thick] (0,0) -- (4.5,0); 
		\draw[thick, dashed] (4.5,0) -- (5.1,0);
		\draw[thick] (0,0) -- (0,3); 
		\draw[thick, dashed] (0,3) -- (0,3.4);
	\end{tikzpicture}
\end{subfigure}
	\hspace{1cm}
\begin{subfigure}[b]{0.4\textwidth}
	\begin{tikzpicture}
		\draw[thick] (0,1) -- (4.5,1);
		\draw[thick] (0,2.5) -- (4.5,2.5);
		\draw[thick] (0,3) -- (4.5,3);
		\draw[thick] (1.5,0) -- (1.5,3);
		\draw[thick] (2.5,0) -- (2.5,3);
		\draw[thick] (4.5,0) -- (4.5,3);
		\draw[thick] (0,0) -- (4.5,0); 
		\draw[thick, dashed] (4.5,0) -- (5.1,0);
		\draw[thick] (0,0) -- (0,3); 
		\draw[thick, dashed] (0,3) -- (0,3.4);
		\draw[thick] (0,0) -- (1.5,1);
		\draw[thick] (1.5,0) -- (2.5,1);
		\draw[thick] (2.5,0) -- (4.5,1);

		\draw[thick] (0,1) -- (1.5,0);
		\draw[thick] (1.5,1) -- (2.5,0);
		\draw[thick] (2.5,1) -- (4.5,0);
		
		\draw[thick] (0,1) -- (1.5,2.5);
		\draw[thick] (1.5,1) -- (2.5,2.5);
		\draw[thick] (2.5,1) -- (4.5,2.5);

		\draw[thick] (0,2.5) -- (1.5,1);
		\draw[thick] (1.5,2.5) -- (2.5,1);
		\draw[thick] (2.5,2.5) -- (4.5,1);

		\draw[thick] (0,2.5) -- (1.5,3);
		\draw[thick] (1.5,2.5) -- (2.5,3);
		\draw[thick] (2.5,2.5) -- (4.5,3);

		\draw[thick] (0,3) -- (1.5,2.5);
		\draw[thick] (1.5,3) -- (2.5,2.5);
		\draw[thick] (2.5,3) -- (4.5,2.5);
		
		\fill[blue] (0,0) circle[radius=3pt];
		\fill[blue] (0,1) circle[radius=3pt];
		\fill[blue] (0,2.5) circle[radius=3pt];
		\fill[blue] (0,3) circle[radius=3pt];
		\fill[blue] (1.5,0) circle[radius=3pt];
		\draw[fill=white] (1.5,1) circle[radius=3pt];
		\draw[fill=white] (1.5,2.5) circle[radius=3pt];
		\draw[fill=white] (1.5,3) circle[radius=3pt];
		\fill[blue] (2.5,0) circle[radius=3pt, color=blue];
		\draw[fill=white] (2.5,1) circle[radius=3pt];
		\draw[fill=white] (2.5,2.5) circle[radius=3pt];
		\draw[fill=white] (2.5,3) circle[radius=3pt];
		\fill[blue] (4.5,0) circle[radius=3pt];
		\draw[fill=white] (4.5,1) circle[radius=3pt];
		\draw[fill=white] (4.5,2.5) circle[radius=3pt];
		\draw[fill=white] (4.5,3) circle[radius=3pt];
		\draw[fill=white] (4.5,1) circle[radius=3pt];
		\draw[fill=white] (4.5,2.5) circle[radius=3pt];
		\draw[fill=white] (4.5,3) circle[radius=3pt];
	\end{tikzpicture}
\end{subfigure}
\caption{At left, a tensor-product mesh on a rectangular domain, and the corresponding graph, with a zero forcing set shown in blue, at right.}
\label{fig:rectangle}
\end{figure}

\begin{theorem}\label{thm:UC_tensorproduct}
  Consider a tensor-product mesh of a rectangular domain, $(a,b)\times (c,d)$, with nodes $(x_i,y_j)$ that satisfy
  \begin{align*}
    a & = x_0 < x_1 < \ldots < x_n = b, \\
    c & = y_0 < y_1 < \ldots < y_m = d.
  \end{align*}
  The $Q_1(\Omega)$ finite-element discretization on this mesh satisfies the unique continuation principle. In fact, any solution to $Au = \lambda Mu$ that vanishes at $\{(x_i,y_0)\}_{i=0}^n$ and $\{(x_0,y_j)\}_{j=0}^m$ must be identically zero.
\end{theorem}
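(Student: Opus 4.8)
The plan is to run the forcing rule of \Cref{lem:step} along \emph{only} the diagonal edges of the $Q_1$ graph, sweeping from the bottom-left corner up and to the right. The single quantitative input is that every diagonal edge carries a strictly negative stiffness entry. A diagonal edge joins two opposite corners of exactly one element, so its global entry equals the elementwise contribution; on the element $[x_{k-1},x_k]\times[y_{\ell-1},y_\ell]$, writing $h_x = x_k - x_{k-1}$ and $h_y = y_\ell - y_{\ell-1}$, a direct computation with the bilinear basis gives
\begin{equation*}
	a_{i,j} = -\frac{1}{6}\left(\frac{h_y}{h_x} + \frac{h_x}{h_y}\right) < 0
\end{equation*}
for the node pair $(x_{k-1},y_{\ell-1})$, $(x_k,y_\ell)$, and likewise for the other diagonal. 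Crucially this is negative for every aspect ratio, in contrast to the horizontal and vertical entries, whose sign is not controlled. Since $m_{i,j} > 0$ for adjacent nodes, it follows that $a_{i,j} - \lambda m_{i,j} < 0 \neq 0$ for every $\lambda \geq 0$, so \Cref{lem:step} may be applied across any diagonal edge.

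Next I set up the sweep. I order the nodes $(x_k,y_\ell)$ with $k \geq 1$ and $\ell \geq 1$ lexicographically, first by the row index $\ell$ and then by the column index $k$, and prove by induction along this order that $u$ vanishes at each such node; the base data is that $u$ vanishes on the bottom edge $\{(x_i,y_0)\}$ and the left edge $\{(x_0,y_j)\}$ by hypothesis. To force the value at $(x_k,y_\ell)$ I apply \Cref{lem:step} with $\bx_i = (x_{k-1},y_{\ell-1})$ and $\bx_j = (x_k,y_\ell)$, which are opposite corners of a common element and hence joined by a diagonal edge. The key observation is that every node of $N[(x_{k-1},y_{\ell-1})]$ other than the target lies either in a strictly lower row ($\ell' \leq \ell-1$), and so was handled before the current row (or is on the boundary), or in row $\ell$ with a strictly smaller column index ($k-2$ or $k-1$), and so was handled earlier in the present row. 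The source $(x_{k-1},y_{\ell-1})$ itself is covered the same way. Together with the boundary hypothesis this shows $u$ vanishes on $N[(x_{k-1},y_{\ell-1})]\setminus\{(x_k,y_\ell)\}$, so \Cref{lem:step} yields $u=0$ at $(x_k,y_\ell)$. Running the induction to completion shows $u$ vanishes at every node with $k,\ell \geq 1$, which together with the boundary data covers all nodes, hence $u \equiv 0$.

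The part to get right is the neighbour bookkeeping in the inductive step: one must check that $u$ vanishes throughout $N[(x_{k-1},y_{\ell-1})]$ except at the target, which involves the source together with its (up to) eight neighbours in the graph $G$, not merely its mesh edges, and one must confirm that the target is the unique node of this neighbourhood not yet known to vanish. Source nodes lying on the bottom or left boundary have a smaller neighbourhood, so the verification only gets easier there. Finally, the statement for $\lambda < 0$ requires no forcing and is immediate: since $A$ is positive semidefinite and $M$ is positive definite, $A - \lambda M = A + |\lambda| M$ is positive definite, so $Au = \lambda M u$ has only the trivial solution. Hence the conclusion holds for all $\lambda \in \bbR$; in particular $\cS_{\rm in}(\lambda) = \{0\}$, establishing the unique continuation principle.
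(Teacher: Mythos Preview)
Your proof is correct and follows essentially the same approach as the paper: compute the diagonal stiffness entry $-\tfrac{1}{6}(h_x/h_y + h_y/h_x)<0$, then sweep from the lower-left corner using \Cref{lem:step} only along diagonal edges, row by row and left to right within each row. Your lexicographic induction is just a more explicit packaging of the paper's ``induction over the rows'' argument, and your separate treatment of $\lambda<0$ via positive definiteness (and $\lambda=0$ folded into the forcing) fills in what the paper leaves implicit by restricting to $\lambda>0$.
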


\begin{proof}
Direct calculation shows that the ``diagonal'' connections in $A$ on such a mesh, between the DOF associated with node $(x_i,y_j)$ and that associated with $(x_{i\pm 1}, y_{j\pm 1})$, are equal to $\frac{-1}{6}\left(\frac{h_x}{h_y} + \frac{h_y}{h_x}\right)$ where $h_x$ and $h_y$ are the side lengths of the common element between these nodes.  These entries are, thus, strictly negative for all tensor-product meshes.

Let $\lambda > 0$ be given and suppose $u$ satisfies $Au = \lambda Mu$ and vanishes on the bottom and left edges of the rectangle, namely the nodes $\{(x_i,y_0)\}_{i=0}^n$ and $\{(x_0,y_j)\}_{j=0}^m$. By induction over the rows in the mesh, it suffices to prove that $u$ vanishes at all of the nodes $(x_i,y_1)_{i=0}^n$ in the second row.

Consider the lower-left corner of the domain, $(x_0,y_0)$, and note that $u$ vanishes here and at all adjacent nodes except possibly $(x_1,y_1)$. Since the entry in $A$ associated to this diagonal connection is negative, \Cref{lem:step} implies that $u$ vanishes at $(x_1,y_1)$. We then apply the lemma at $(x_1,y_0)$ to conclude that $u$ vanishes at $(x_2,y_1)$. Repeating this argument, we see that $u$ vanishes at all of the points $(x_i,y_1)_{i=0}^n$ in the second row of the mesh, as was to be shown.
\end{proof}

Having proved the theorem, we now relate it to \emph{edge-leaky forcing} \cite{leaky}. This is a recent generalization of zero forcing in which certain edges in the graph are designated as \emph{leaks}, which means they cannot be used in the zero forcing process, but still must be considered when counting how many white neighbours a given blue vertex has. In other words, the colouring rule is modified so that if a blue vertex has exactly one white neighbour \emph{and} the edge between them is not a leak, then that white vertex becomes blue.

Recalling the graph $G$ constructed in \Cref{sec:graph}, where $\bx_i$ and $\bx_j$ are connected by an edge if and only if $m_{i,j} \neq 0$, we designate an edge as \emph{leaky} if $a_{i,j} > 0$. For a leaky edge the corresponding matrix entry $a_{i,j} - \lambda m_{i,j}$ will vanish for some $\lambda>0$, in which case \Cref{lem:step} cannot be applied. This problem does not occur for a non-leaky edge, since it will have $a_{i,j} - \lambda m_{i,j} < 0$ for all $\lambda > 0$. 
%

With this notion we get an easy generalization of \cite[Proposition~2.3]{AIM}: If $S$ is an edge-leaky forcing set for the graph $G$, then any solution to $Au = \lambda Mu$ that vanishes on $S$ must be identically zero. The proof of \Cref{thm:UC_tensorproduct} then amounts to the observation that the set of blue nodes can force the graph at the right of \Cref{fig:rectangle}, even when all of the horizontal and vertical edges are designated as leaks.

This consideration also leads to a generalization of \Cref{thmZFS_triangular} for triangular meshes: We do not need the angle condition to hold for every simplex in the mesh, since we only need $a_{i,j} \leq 0$ on edges that are using in the forcing. For instance, \Cref{fig:triangular_leaky} shows a triangular mesh of a rectangular domain where many triangles are elongated with a 6:1 aspect ratio (base length to height).  Following the calculations in~\Cref{sec:elemental}, we can verify that the entries in $A$ associated with the horizontal edges in the mesh are positive; however, these edges are not needed in the forcing.  Similarly, we can verify that the entries in $A$ associated with the ``diagonal'' edges are negative, so this mesh satisfies the unique continuation principle.
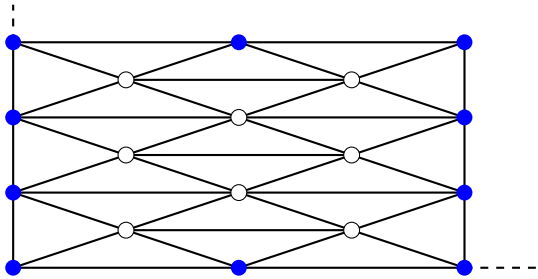
\begin{figure}
    	\begin{tikzpicture}
		\draw[thick] (0,0) -- (6,0); 
		\draw[thick, dashed] (6,0) -- (7,0);
		\draw[thick] (0,0) -- (0,3); 
		\draw[thick, dashed] (0,3) -- (0,3.5);
                \draw[thick] (0,3) -- (6,3);
                \draw[thick] (6,0) -- (6,3);
                \draw[thick] (0,1) -- (6,1);
                \draw[thick] (0,2) -- (6,2);
                \draw[thick] (0,0) -- (1.5,0.5) -- (0,1); 
                \draw[thick] (6,0) -- (4.5,0.5) -- (6,1);
                \draw[thick] (1.5,0.5) -- (4.5,0.5);
                \draw[thick] (1.5,0.5) -- (3,0) -- (4.5,0.5);
                \draw[thick] (1.5,0.5) -- (3,1) -- (4.5,0.5);
                \draw[thick] (0,1) -- (1.5,1.5) -- (0,2); 
                \draw[thick] (6,1) -- (4.5,1.5) -- (6,2);
                \draw[thick] (1.5,1.5) -- (4.5,1.5);
                \draw[thick] (1.5,1.5) -- (3,1) -- (4.5,1.5);
                \draw[thick] (1.5,1.5) -- (3,2) -- (4.5,1.5);
                \draw[thick] (0,2) -- (1.5,2.5) -- (0,3); 
                \draw[thick] (6,2) -- (4.5,2.5) -- (6,3);
                \draw[thick] (1.5,2.5) -- (4.5,2.5);
                \draw[thick] (1.5,2.5) -- (3,2) -- (4.5,2.5);
                \draw[thick] (1.5,2.5) -- (3,3) -- (4.5,2.5);
                \fill[blue] (0,0) circle[radius=3pt];
                \fill[blue] (3,0) circle[radius=3pt];
                \fill[blue] (6,0) circle[radius=3pt];
                \fill[blue] (0,1) circle[radius=3pt];
                \fill[blue] (6,1) circle[radius=3pt];
                \fill[blue] (0,2) circle[radius=3pt];
                \fill[blue] (6,2) circle[radius=3pt];
                \fill[blue] (0,3) circle[radius=3pt];
                \fill[blue] (3,3) circle[radius=3pt];
                \fill[blue] (6,3) circle[radius=3pt];
		\draw[fill=white] (1.5,0.5) circle[radius=3pt];
		\draw[fill=white] (4.5,0.5) circle[radius=3pt];
		\draw[fill=white] (3,1) circle[radius=3pt];
		\draw[fill=white] (1.5,1.5) circle[radius=3pt];
		\draw[fill=white] (4.5,1.5) circle[radius=3pt];
		\draw[fill=white] (3,2) circle[radius=3pt];
		\draw[fill=white] (1.5,2.5) circle[radius=3pt];
		\draw[fill=white] (4.5,2.5) circle[radius=3pt];
        \end{tikzpicture}
        \caption{Triangular mesh for which the $P_1(\Omega)$ discretization satisfies the unique continuation principle via edge-leaky forcing.}\label{fig:triangular_leaky}
\end{figure}

\section{Examples and counterexamples}
\label{sec:examples}

\subsection{The regular hexagon}\label{ssec:hex}
We first study the triangulation of the hexagon shown at the right of \Cref{fig:hexes}, which we reproduce, labelling the nodes, at the left of \Cref{fig:hexhep}. As discussed above, the boundary plus any single node from the inner hexagon forms a zero forcing set, so \Cref{thm:Sbound} implies $\dim \cS_{\rm in}(\lambda) \leq 1$ for all $\lambda>0$. We now show that this bound is sharp, by finding $\lambda_*>0$ for which $\cS_{\rm in}(\lambda_*)$ is one dimensional. In addition to demonstrating the sharpness of this upper bound, this gives a concrete example of a mesh for which unique continuation fails.
%
%
%
We will also show, in \Cref{ssec:deform}, that the existence of an inner solution is highly dependent on the symmetry of the mesh.

\begin{figure}[!tbp]
\begin{subfigure}[b]{0.47\textwidth}
  \includegraphics[width=\textwidth]{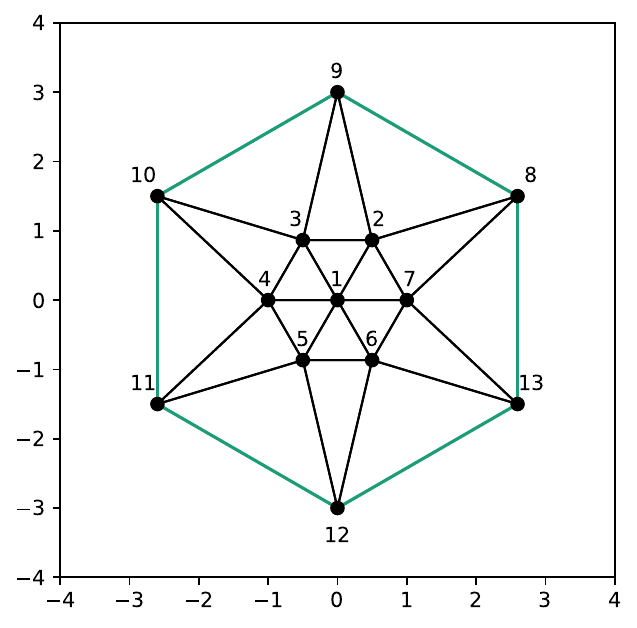}
\end{subfigure}
\begin{subfigure}[b]{0.47\textwidth}
  \includegraphics[width=\textwidth]{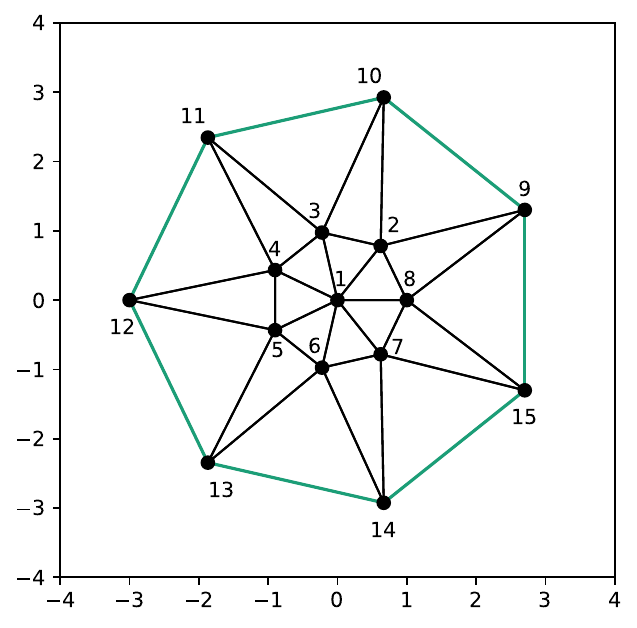}
\end{subfigure}
\caption{Meshes of hexagonal and heptagonal domains for which the boundary nodes are not zero-forcing sets.}
\label{fig:hexhep}
\end{figure}

We first clarify the construction of the mesh, which has
\begin{itemize}
	\item a node at $(0,0)$;
	\item an ``inner ring" of nodes at $\big(\! \cos \frac{j\pi}{3}, \sin \frac{j\pi}{3} \big)$, for $0 \leq j \leq 5$;
	\item an ``outer ring" of nodes at $\big(d \cos \frac{(2j+1)\pi}{6}, d\sin \frac{(2j+1)\pi}{6} \big)$, for $0 \leq j \leq 5$ 
	and some $d > 2/\sqrt3$.
\end{itemize}
The condition on $d$ ensures that we have a valid triangulation. 
It is easily verified that this mesh satisfies the interior angle condition for any $d \geq 1 + \sqrt3$. In \Cref{fig:hexhep}, we have chosen $d=3$.

\begin{theorem}
\label{thm:hex}
Consider the mesh described above and define
\begin{equation}
\label{lambdaker}
	\lambda_* = \frac{a_{2,2} - 2 a_{2,3}}{m_{2,2} - 2 m_{2,3}}.
\end{equation}
If $d \geq 1 + \sqrt3$, then $\cS_{\rm in}(\lambda)$ is spanned by $(0,1,-1,1,-1,1,-1)$ if $\lambda = \lambda_*$
and is trivial otherwise.
\end{theorem}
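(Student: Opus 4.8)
The plan is to exploit the dihedral ($D_6$) symmetry of the mesh to reduce the problem to a one-parameter computation, then combine this with the dimension bound already in hand. Label the central node $1$, the inner-ring nodes $2,\dots,7$ cyclically, and the outer-ring (boundary) nodes $8,\dots,13$, and write $u_I = (u_1,\dots,u_7)$. Because the mesh is invariant under the rotation $R$ by $\pi/3$ and the reflections generating $D_6$, relabelling the nodes by any such symmetry leaves $A$ and $M$ unchanged, and each symmetry maps the centre, the inner ring, and the outer ring to themselves. Hence $A_{II}-\lambda M_{II}$ and $A_{BI}-\lambda M_{BI}$ are equivariant, so their kernels, and therefore $\cS_{\rm in}(\lambda)$, are $D_6$-invariant subspaces of $\bbR^7$. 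By \Cref{thm:Sbound} we already know $\dim\cS_{\rm in}(\lambda)\le 1$ for every $\lambda>0$, while $\cS_{\rm in}(\lambda)=\{0\}$ for $\lambda\le 0$ (since then $\lambda<\lambda_1$). If $\cS_{\rm in}(\lambda)$ is nonzero it is thus a one-dimensional $R$-invariant real line, on which $R$ acts as a scalar $\omega$ with $\omega^6=1$; over $\bbR$ this forces $\omega=\pm 1$. The case $\omega=-1$ forces $u_1=0$ (as $R$ fixes the centre) and alternating values around the ring, so $u_I$ is a multiple of $(0,1,-1,1,-1,1,-1)$; the case $\omega=+1$ forces $u_I=(c_0,c_1,\dots,c_1)$, constant on the ring.

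For the alternating vector $w=(0,1,-1,1,-1,1,-1)$ I would verify the two membership conditions directly. In each boundary row, an outer node sees two inner neighbours $p,q$ carrying opposite values $\pm1$, and the reflection through that node gives $a_{b,p}=a_{b,q}$ and $m_{b,p}=m_{b,q}$; the row then equals $(a_{b,p}-\lambda m_{b,p})(u_p+u_q)=0$ for every $\lambda$. Among the interior rows, the central row vanishes because $u_1=0$ and the ring values sum to zero, while a representative ring row (at node $2$, whose interior neighbours are the centre and the ring nodes $3,7$) reduces, using $u_1=0$, $u_2=1$, $u_3=u_7=-1$ and the symmetries $a_{2,3}=a_{2,7}$, $m_{2,3}=m_{2,7}$, to $(a_{2,2}-\lambda m_{2,2})-2(a_{2,3}-\lambda m_{2,3})$. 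This vanishes exactly when $\lambda=\lambda_*$, matching \eqref{lambdaker}, and all other ring rows are equivalent under $R$. Thus $w\in\cS_{\rm in}(\lambda)$ precisely when $\lambda=\lambda_*$.

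It remains to rule out the symmetric case, which I expect to be the main obstacle. For $u_I=(c_0,c_1,\dots,c_1)$, a boundary row at an outer node $b$ with inner neighbours $p,q$ reads $2(a_{b,p}-\lambda m_{b,p})c_1=0$. The hypothesis $d\ge 1+\sqrt3$ guarantees via \Cref{thm:P1_Mmatrix} that $a_{b,p}\le 0$, while $m_{b,p}>0$; so if $c_1\neq 0$ the equation forces $\lambda=a_{b,p}/m_{b,p}\le 0$, contradicting $\lambda>0$. If instead $c_1=0$, then a ring row collapses to $(a_{2,1}-\lambda m_{2,1})c_0=0$ with $a_{2,1}-\lambda m_{2,1}<0$, forcing $c_0=0$. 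Hence there is no nonzero symmetric inner solution for $\lambda>0$, and every nonzero inner solution lies in the alternating case, occurring only at $\lambda=\lambda_*$.

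Finally, to confirm that $\lambda_*$ is positive (and that the denominator in \eqref{lambdaker} does not vanish), I would use that $w\in\ker(A_{II}-\lambda_* M_{II})$ gives the Rayleigh-quotient identity $\lambda_*=(w^\top A_{II}w)/(w^\top M_{II}w)$. A short computation, using that the only nonzero mass and stiffness couplings among the ring nodes are between cyclic neighbours, yields $w^\top M_{II}w=6(m_{2,2}-2m_{2,3})$ and $w^\top A_{II}w=6(a_{2,2}-2a_{2,3})$, both strictly positive by positive-definiteness of the interior stiffness and mass matrices; in particular $m_{2,2}-2m_{2,3}\neq 0$ and $\lambda_*>0$. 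Combining the two symmetry cases with the bound $\dim\cS_{\rm in}(\lambda)\le 1$ and the vanishing of $\cS_{\rm in}$ for $\lambda\le 0$ then gives exactly the claimed description: $\cS_{\rm in}(\lambda_*)=\spn\{(0,1,-1,1,-1,1,-1)\}$ and $\cS_{\rm in}(\lambda)=\{0\}$ otherwise.
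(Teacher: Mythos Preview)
Your proof is correct and takes a genuinely different route from the paper's. The paper argues directly from the linear system: writing $C=A-\lambda M$, it uses the six boundary rows together with the reflection identities $c_{8,2}=c_{8,7}\neq 0$, etc., to force $u_2+u_7=0$, $u_2+u_3=0$, and so on, which already pins any inner solution to the form $(u_1,u_2,-u_2,u_2,-u_2,u_2,-u_2)$; adding two interior rows then gives $c_{2,1}u_1=0$, hence $u_1=0$, and the remaining rows collapse to $c_{2,2}-2c_{2,3}=0$. In particular the paper never appeals to \Cref{thm:Sbound}, and the ``symmetric'' candidate $(c_0,c_1,\dots,c_1)$ never arises as a separate case---it is excluded at the outset by the boundary equations themselves. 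Your argument instead combines the $D_6$-invariance of $\cS_{\rm in}(\lambda)$ with the bound $\dim\cS_{\rm in}(\lambda)\le 1$ from \Cref{thm:Sbound} to restrict attention to real one-dimensional $R$-eigenspaces (eigenvalue $\pm 1$), and then disposes of each case by direct verification. This is more conceptual and avoids displaying the matrix; it also gives a cleaner proof that $m_{2,2}-2m_{2,3}>0$ via positive-definiteness of $M_{II}$, in place of the elemental computation in \Cref{lemma:m22}. The paper's approach, by contrast, is self-contained in that it does not lean on \Cref{thm:Sbound}, and would still establish uniqueness even if that dimension bound were not already available.
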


\begin{rem}
Even if the angle condition is not satisfied, \eqref{lambdaker} will guarantee $(0,1,-1,1,-1,1,-1) \in \cS_{\rm in}(\lambda_*)$, meaning unique continuation is violated. We need the angle condition, however, to prove that this is the \emph{only} inner solution.
\end{rem}


\begin{lemma}
\label{lemma:m22}
For any $d > 2/\sqrt3$  we have $m_{2,2} > 2 m_{2,3}$.
\end{lemma}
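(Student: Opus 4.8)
The plan is to compute both quantities directly from the assembled $P_1$ mass matrix and reduce the claimed inequality to a positivity statement about triangle areas. Recall that on a single triangle $T$ of area $|T|$, the $P_1$ local mass matrix has diagonal entries $|T|/6$ and off-diagonal entries $|T|/12$; equivalently $\int_T \phi_i^2 \,dA = |T|/6$ and $\int_T \phi_i\phi_j\,dA = |T|/12$ for $i \neq j$. Assembling over the mesh, I would write
\begin{equation*}
	m_{2,2} = \frac16 \sum_{T \ni \bx_2} |T|, \qquad m_{2,3} = \frac{1}{12} \sum_{T \ni \bx_2,\bx_3} |T|,
\end{equation*}
where the first sum runs over the star of $\bx_2$ (all triangles having $\bx_2$ as a vertex) and the second over the triangles that share the edge $\bx_2\bx_3$.

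Subtracting, the $\tfrac16$ factors align and the second sum is a subsum of the first, so
\begin{equation*}
	m_{2,2} - 2 m_{2,3} = \frac16 \sum_{T \ni \bx_2} |T| - \frac16 \sum_{T \ni \bx_2,\bx_3} |T| = \frac16 \sum_{\substack{T \ni \bx_2 \\ \bx_3 \notin T}} |T|.
\end{equation*}
Thus the lemma is equivalent to the assertion that the star of $\bx_2$ contains at least one triangle not incident to the edge $\bx_2\bx_3$, which is automatically of positive area.

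The remaining step is purely combinatorial: reading off the relevant triangles from the mesh. The inner-ring node $\bx_2 = (1,0)$ is a vertex of two central triangles (with apex at the origin) together with several triangles reaching out to the outer ring, whereas the edge $\bx_2\bx_3$ to the neighbouring inner-ring node belongs only to the two triangles flanking it. In particular, the central triangle with vertices $\bx_2$, the origin, and the \emph{other} inner-ring neighbour of $\bx_2$ contains $\bx_2$ but not $\bx_3$, and its area ($=\sqrt3/4$) is independent of $d$. Since the triangulation is non-degenerate for every $d > 2/\sqrt3$, this triangle has strictly positive area, the final sum above is strictly positive, and $m_{2,2} > 2 m_{2,3}$.

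I do not expect any genuine obstacle here: once the mass-matrix assembly is set up, the statement is a positivity-of-area fact. The only point requiring care is the bookkeeping that justifies the cancellation in the displayed difference, namely verifying which triangles of the star of $\bx_2$ are incident to $\bx_2\bx_3$ and which are not; this is immediate from the explicit triangulation described in the construction of the mesh, and in fact shows the difference is bounded below by $\tfrac16 \cdot \tfrac{\sqrt3}{4} > 0$ uniformly in $d$.
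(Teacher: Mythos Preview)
Your argument is correct and rests on the same elemental-assembly idea as the paper, but you package it more cleanly. The paper proceeds by explicitly computing each elemental contribution to $m_{2,2}$ (five elements) and $m_{2,3}$ (two elements) from the formulas in the appendix, then subtracting to obtain the closed form $m_{2,2}-2m_{2,3}=d^2\sqrt3/24$, which is manifestly positive. You instead exploit the structural identity $m_{2,2}-2m_{2,3}=\tfrac16\sum_{T\ni\bx_2,\ \bx_3\notin T}|T|$, which holds for \emph{any} $P_1$ mesh, and then simply observe that the star of $\bx_2$ contains triangles not incident to $\bx_3$. This buys generality and avoids arithmetic; the paper's explicit computation, on the other hand, yields the exact value of $m_{2,2}-2m_{2,3}$, which feeds directly into the formula for $\lambda_*$ used later. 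One small remark: your lower bound $\tfrac16\cdot\tfrac{\sqrt3}{4}$ is not uniform in $d$ only because you drop two of the three non-shared triangles; summing all three recovers exactly the paper's $d^2\sqrt3/24$.
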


\begin{proof}
We note that all nonzero entries in both $A$ and $M$ can be computing by summing contributions to their values from the so-called ``elemental'' matrices computed by evaluating the corresponding weak form over a single element.  To compute the value of $m_{2,2}$, we sum contributions from the 5 elements adjacent to node 2, while only 2 elements are adjacent to both nodes 2 and 3 and, thus, contribute to $m_{2,3}$. Full details of this calculation are given in~\Cref{sec:elemental}.
%
\end{proof}

\begin{proof}[Proof of \Cref{thm:hex}]
Assume $\lambda > 0$ and let $C = A - \lambda M$. Taking into account the connections in the graph, we have
\begin{equation}
	\begin{pmatrix} C_{II} \\ C_{BI} \end{pmatrix} = 
	\begin{pmatrix} c_{1,1} & c_{1,2} & c_{1,3} & c_{1,4} & c_{1,5} & c_{1,6} & c_{1,7} \\
	c_{2,1} & c_{2,2} & c_{2,3} & 0 & 0 & 0 & c_{2,7} \\
	c_{3,1} & c_{3,2} & c_{3,3} & c_{3,4} & 0 & 0 & 0 \\
	c_{4,1} & 0 & c_{4,3} & c_{4,4} & c_{4,5} & 0 & 0 \\
	c_{5,1} & 0 & 0 & c_{5,4} & c_{5,5} & c_{5,6} & 0 \\
	c_{6,1} & 0 & 0 & 0 & c_{6,5} & c_{6,6} & c_{6,7} \\
	c_{7,1} & c_{7,2} & 0 & 0 & 0 & c_{7,6} & c_{7,7} \\
	0 & c_{8,2} & 0 & 0 & 0 & 0 & c_{8,7} \\
	0 & c_{9,2} & c_{9,3} & 0 & 0 & 0 & 0 \\
	0 & 0 & c_{10,3} & c_{10,4} & 0 & 0 & 0 \\
	0 & 0 & 0 & c_{11,4} & c_{11,5} & 0 & 0 \\
	0 & 0 & 0 & 0 & c_{12,5} & c_{12,6} & 0 \\
	0 & 0 & 0 & 0 & 0 & c_{13,6} & c_{13,7} \\
	\end{pmatrix}.
\end{equation}
Suppose $u_I = (u_1, \ldots, u_7)$ is in the kernel. By the symmetry of the domain, we have $c_{8,2} = c_{8,7} \neq 0$, so the eighth row of the equation yields $u_2 + u_7 = 0$. Similarly, the ninth row yields $u_2 + u_3 = 0$. Using rows 8-13 in this manner, we conclude that
\[
	u_I = (u_1, u_2, -u_2, u_2, -u_2, u_2, -u_2)
\]
for some $u_1,u_2 \in \bbR$.

The symmetry of the mesh implies that $c_{2,1} = c_{3,1}$, $c_{2,2} = c_{3,3}$ and $c_{2,3} = c_{2,7} = c_{3,4}$, so the second and third rows of the equation yield
\begin{align*}
	c_{2,1} u_1 + c_{2,2} - 2 c_{2,3} = 0, \\
	c_{2,1} u_1 + 2 c_{2,3} - c_{2,2}  = 0.
\end{align*}
Adding these gives $c_{2,1} u_1 = 0$. Since $c_{2,1}$ is nonzero for any $\lambda>0$ (see \Cref{sec:elemental}), we conclude that $u_1 = 0$. It follows that any nontrivial $u_I$ in the kernel must have $u_2 \neq 0$, so it can be normalized to have the form
\[
	u_I = (0, 1, -1, 1, -1, 1, -1).
\]
This satisfies the first row of the equation, because $c_{1,2} = c_{1,3} = c_{1,4} = c_{1,5} = c_{1,6} = c_{1,7}$, so we just need to check if it satisfies rows 2--7. This will be the case if and only if the vector
\begin{equation}
	(1,-1,1,-1,1,-1)
\end{equation}
is in the kernel of the $6 \times 6$ matrix
\begin{equation}
	\begin{pmatrix} 
	c_{2,2} & c_{2,3} & 0 & 0 & 0 & c_{2,7} \\
	c_{3,2} & c_{3,3} & c_{3,4} & 0 & 0 & 0 \\
	0 & c_{4,3} & c_{4,4} & c_{4,5} & 0 & 0 \\
	0 & 0 & c_{5,4} & c_{5,5} & c_{5,6} & 0 \\
	 0 & 0 & 0 & c_{6,5} & c_{6,6} & c_{6,7} \\
	c_{7,2} & 0 & 0 & 0 & c_{7,6} & c_{7,7} \\
	\end{pmatrix}.
\end{equation}
By symmetry, we again conclude that all of the diagonal entries are equal, say to $c_{2,2}$. Similarly, all of the off-diagonal entries are equal, say to $c_{2,3}$, so this matrix has $(1,-1,1,-1,1,-1)$ in its kernel if and only if $c_{2,2} - 2 c_{2,3} = 0$. Recalling the definition of $C = A - \lambda M$, this means
\[
	(a_{2,2} - \lambda m_{2,2}) - 2(a_{2,3} - \lambda m_{2,3}) = 0.
\]
Since $m_{2,2} \neq 2 m_{2,3}$, by \Cref{lemma:m22}, we can solve for $\lambda$, resulting in \eqref{lambdaker}.
\end{proof}

\subsection{The deformed hexagon}
\label{ssec:deform}

Now suppose the mesh is deformed by moving each node along some $C^1$ path $\bx_i(s)$. For small $s$ (such that the $\bx_i(s)$ remain distinct), we obtain $C^1$ families of symmetric matrices, $A(s)$ and $M(s)$, whose derivatives at $s=0$ will be denoted $\dot A$ and $\dot M$. The following theorem shows that the inner solution found in \Cref{thm:hex} disappears for most of these deformations.

\begin{theorem}
\label{thm:perturb}
Suppose the number $\lambda_*$ in \eqref{lambdaker} is a simple Neumann eigenvalue for the unperturbed mesh, and define
\begin{align}
	u_I = (0,1,-1,1,-1,1,-1) \in \bbR^7, \qquad \hat u_B = (1,-1,1,-1,1,-1) \in \bbR^6.
\end{align}
If
\begin{equation}
\label{break}
	\big<(\dot A_{BI} - \lambda_* \dot M_{BI}) u_I, \hat u_B \big> \neq 0,
\end{equation}
then the deformed mesh satisfies the unique continuation principle for $0 < |s| \ll 1$.
\end{theorem}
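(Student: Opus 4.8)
The plan is to recognize a nonzero inner solution at $\lambda$ as exactly a Neumann eigenvector $(u_I,0)$ with vanishing boundary part, and then to show that deforming the mesh forces the relevant eigenvector off the subspace $\{u_B=0\}$. By \Cref{thm:hex} the unperturbed mesh has $\cS_{\rm in}(\lambda)=\{0\}$ for every $\lambda\neq\lambda_*$, with the one-dimensional space at $\lambda_*$ spanned by $u^{(0)}:=(u_I,0)$. Since $\lambda_*$ is assumed to be a simple Neumann eigenvalue, standard $C^1$ perturbation theory for the symmetric pencil $A(s)u=\lambda M(s)u$ (with $M(s)$ positive definite) furnishes a $C^1$ branch $\lambda(s)$, $u(s)=(u_I(s),u_B(s))$ with $\lambda(0)=\lambda_*$, $u(0)=u^{(0)}$, and $(A(s)-\lambda(s)M(s))u(s)=0$. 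Two bookkeeping facts then localize the problem. First, a compactness argument shows that for $s$ small every $\lambda$ with $\cS_{\rm in}(\lambda)\neq\{0\}$ lies near $\lambda_*$: bad $\lambda$ are Dirichlet eigenvalues, hence bounded, so a normalized sequence of inner solutions along $s_k\to 0$ with $\lambda_k$ bounded away from $\lambda_*$ would converge to a nonzero inner solution of the unperturbed mesh at some $\lambda\neq\lambda_*$, which is impossible. Second, simplicity of $\lambda_*$ guarantees that near $\lambda_*$ the only Neumann eigenvalue is $\lambda(s)$, with eigenvector proportional to $u(s)$, so the perturbed mesh can fail the UCP only if $u_B(s)=0$. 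The entire argument thus reduces to showing $u_B(s)\neq 0$ for $0<|s|\ll 1$.

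To analyze $u_B(s)$, I would differentiate $(A(s)-\lambda(s)M(s))u(s)=0$ at $s=0$. Writing $C=A-\lambda_* M$ and $\dot C=\dot A-\lambda_*\dot M$ (all at $s=0$), this gives $C\dot u=\dot\lambda\, M u^{(0)}-\dot C u^{(0)}$. Extracting the boundary block and using $u^{(0)}=(u_I,0)$ yields
\begin{equation*}
	C_{BI}\dot u_I + C_{BB}\dot u_B = \dot\lambda\, M_{BI} u_I - (\dot A_{BI}-\lambda_*\dot M_{BI})u_I.
\end{equation*}
Because $u_B(0)=0$ we have $u_B(s)=s\,\dot u_B+O(s^2)$, and $\dot u_B$ is independent of the free normalization of the branch, since rescaling alters $u(s)$ only by a multiple of $u^{(0)}$ whose boundary part vanishes. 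Hence it suffices to prove $\dot u_B\neq 0$.

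The crux is to pair this boundary block with $\hat u_B$ and to use the symmetry of the unperturbed mesh to annihilate the terms carrying $\dot u_I$ and $\dot\lambda$. Exactly as in the proof of \Cref{thm:hex}, each outer node couples with equal weight to its two inner neighbours, whose $u_I$-values have opposite sign; this gives $M_{BI}u_I=0$ (the same identity that made $u^{(0)}$ an inner solution, now for $M$), so the $\dot\lambda$ term drops. Dually, each inner node couples with equal weight to its two outer neighbours, whose $\hat u_B$-values have opposite sign, and the centre node couples to no outer node, so $C_{IB}\hat u_B=0$; hence $\langle C_{BI}\dot u_I,\hat u_B\rangle=\langle\dot u_I,C_{IB}\hat u_B\rangle=0$ by symmetry of $C$. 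What remains is
\begin{equation*}
	\langle C_{BB}\dot u_B,\hat u_B\rangle = -\big\langle (\dot A_{BI}-\lambda_*\dot M_{BI})u_I,\hat u_B\big\rangle,
\end{equation*}
whose right-hand side is nonzero precisely by hypothesis \eqref{break}. In particular $\langle C_{BB}\dot u_B,\hat u_B\rangle\neq 0$, which forces $\dot u_B\neq 0$, and therefore $u_B(s)\neq 0$ for $0<|s|\ll 1$.

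Assembling the pieces: for small $s\neq 0$, the compactness and simplicity facts confine any failure of the UCP to $\lambda=\lambda(s)$ with eigenvector proportional to $u(s)$, but $u_B(s)\neq 0$ shows $u(s)$ is not an inner solution; hence $\cS_{\rm in}(\lambda)=\{0\}$ for all $\lambda$, as required by \Cref{def:UCP}. I expect the main obstacle to be the two symmetry cancellations $M_{BI}u_I=0$ and $C_{IB}\hat u_B=0$: these are exactly what single out $\hat u_B$ as the correct test vector and what make \eqref{break} the sharp non-degeneracy condition, and they must be verified from the mesh's rotational and reflection symmetries. The global confinement of the bad spectral values via compactness is routine but should be stated carefully, and one advantage of routing the local step through the simple Neumann eigenvalue (rather than through \Cref{thm:Sbound}) is that no angle condition on the deformed mesh is needed.
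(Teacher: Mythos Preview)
Your proof is correct and follows essentially the same route as the paper: differentiate the $C^1$ branch of the simple Neumann eigenvalue, use the symmetry identities $M_{BI}u_I=0$ and $C_{IB}\hat u_B=0$ (equivalently, $\hat u_B\in\ker(A_{IB}-\lambda_*M_{IB})$) to eliminate the $\dot\lambda$ and $\dot u_I$ contributions, and conclude $\dot u_B\neq 0$ from \eqref{break}. The paper phrases the last step as a contradiction (assuming $\dot u_B=0$ forces the pairing in \eqref{break} to vanish) whereas you pair the boundary block with $\hat u_B$ directly to obtain $\langle C_{BB}\dot u_B,\hat u_B\rangle\neq 0$, but this is the same argument.
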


This means the inner solution does not persist except possibly along a codimension one set of deformations. (The space of all possible deformations is $26$-dimensional, since each node can be moved independently in $\bbR^2$.) In fact a much stronger result is likely true: that the set of deformations for which an inner solution persists is a smooth 20-dimensional submanifold of $\bbR^{26}$. This could be proved by a transversality argument given more detailed information about the dependence of $A$ and $M$ on the node locations; we do not pursue this here.

\Cref{thm:perturb}, while not optimal, is sufficient to demonstrate that even when the boundary is not a zero forcing set, the existence of an inner solution is a non-generic phenomenon. The solution found in \Cref{thm:hex} is highly dependent on the symmetries of the mesh, and does not persist under any deformation that breaks enough of these symmetries. For instance, writing $D = \dot A - \lambda \dot M$, with entries $d_{i,j}$, a straightforward calculation shows that the condition \eqref{break} is equivalent to
\begin{equation}
\label{Dbreak}
	d_{8,2} + d_{9,3} + d_{10,4} + d_{11,5} + d_{12,6} + d_{13,7} \ \neq \ d_{8,7} + d_{9,2} + d_{10,3} + d_{11,4} + d_{12,5} + d_{13,6}.
\end{equation}
This can be viewed as a symmetry-breaking condition on the pairs of edges between the inner and outer hexagons.


\begin{proof}
For sufficiently small $s$, we will show that the perturbed system has only one Neumann eigenvalue near $\lambda_*$, and the corresponding eigenvector does not vanish on the boundary nodes. As none of the other Neumann eigenvectors vanish on the boundary at $s=0$ (by \Cref{thm:hex}, an inner solution only exists at $\lambda = \lambda_*$), the same will be true for small $s$ by continuity.

Since $\lambda_*$ is a simple eigenvalue for the Neumann problem, there exists a $C^1$ path of eigenvalues, $\lambda(s)$, and corresponding eigenvectors $u(s)$, satisfying
\[
	A(s) u(s) = \lambda(s) M(s) u(s),
\]
with $\lambda(0) = \lambda_*$ and $u(0) = (u_I,0) \in \bbR^{13}$. Differentiating and then setting $s=0$, we find that
\begin{equation}
\label{eq:var}
	(A - \lambda_* M) \dot u = -(\dot A - \lambda_* \dot M - \dot\lambda M)u.
\end{equation}

Write $\dot u = (\dot u_I, \dot u_B)$.  We will prove that \eqref{break} implies $\dot u_B \neq 0$, which implies $u_B(s)$ is not identically zero for $0 < |s| \ll 1$.  Hence, $u(s)$ is not an inner solution.

If $\dot u_B = 0$, then we have
\begin{align*}
	(A_{BI} - \lambda_* M_{BI}) \dot u_I &= -(\dot A_{BI} - \lambda_* \dot M_{BI} - \dot\lambda M_{BI})u_I \\
	&= -(\dot A_{BI} - \lambda_* \dot M_{BI})u_I
\end{align*}
where we have used the fact that $M_{BI} u_I = 0$ in the second line. It follows that $(\dot A_{BI} - \lambda_* \dot M_{BI})u_I$ is in the range of $(A_{BI} - \lambda_* M_{BI})$, and hence is orthogonal to the kernel of $(A_{IB} - \lambda_* M_{IB})$, which is easily seen to be spanned by $\hat u_B$. This orthogonality condition,
\begin{equation}
	\big<(\dot A_{BI} - \lambda_* \dot M_{BI}) u_I, \hat u_B \big> = 0,
\end{equation}
contradicts \eqref{break} and hence completes the proof.
\end{proof}

\subsection{The heptagon}

We next consider the mesh of the heptagon shown at the right of \Cref{fig:hexhep}. Like the hexagon, the boundary is not a zero forcing set, so \Cref{thm:Sbound} implies $\dim \cS_{\rm in}(\lambda) \leq 1$ for all $\lambda>0$. 
However, we are able to establish the UCP for this mesh, provided it satisfies the interior angle condition. Our proof relies on a parity argument, using the fact that there are an odd number of boundary vertices, which is why it does not apply to the hexagon. It does not depend on any symmetry assumptions, so the result is valid for any mesh with the same topology, provided the M-matrix condition is satisfied.

Generalizing the construction from above, we now take the nodes of the mesh to be
\begin{itemize}
	\item a node at $(0,0)$;
	\item an ``inner ring" of nodes at $\big(\! \cos \frac{2j\pi}{7}, \sin \frac{2j\pi}{7} \big)$, for $0 \leq j \leq 6$;
	\item an ``outer ring" of nodes at $\big(d \cos \frac{(2j+1)\pi}{7}, d\sin \frac{(2j+1)\pi}{7} \big)$, for $0 \leq j \leq 6$ 
	and some $d > 1/\cos(\pi/7)$.
\end{itemize}
For the domain in~\Cref{fig:hexhep}, we take $d=3$.

Suppose that $u = (u_I,0)$ is an inner solution.
If $u_2 = 0$, then we could immediately conclude that $u = 0$, since the boundary plus node 2 forms a zero forcing set. Therefore, it suffices to consider the case $u_2 \neq 0$. Without loss of generality we assume $u_2 > 0$.

Consider node 10 on the boundary of the heptagon, which is adjacent to nodes 2 and 3. The corresponding row of the equation $(A - \lambda M)u = 0$ is
\begin{equation}
	0 = [(A - \lambda M)u]_{10} = (a_{10,2} - \lambda m_{10,2}) u_2 + (a_{10,3} - \lambda m_{10,3}) u_3.
\end{equation}
The angle condition guarantees $a_{10,2} - \lambda m_{10,2} < 0$ and $a_{10,3} - \lambda m_{10,3} < 0$, so we must have $u_3 < 0$. Next, considering boundary node 11, we similarly get that $u_4 > 0$. Continuing around the inner hexagon, we find $u_8 > 0$. Then, considering boundary node 9, we get that $u_2 < 0$, a contradiction.

\begin{rem}
In the above proof, it is essential that the off-diagonal entries of $A - \lambda M$ are negative, as this forces the alternating signs of $u_i$ around the inner heptagon, resulting in a contradiction. Just knowing that the off-diagonal entries are non-zero is not sufficient, since the boundary is not a zero forcing set.
\end{rem}

\subsection{A multiply-connected domain}

\begin{figure}[!tbp]
  \includegraphics[width=0.8\textwidth]{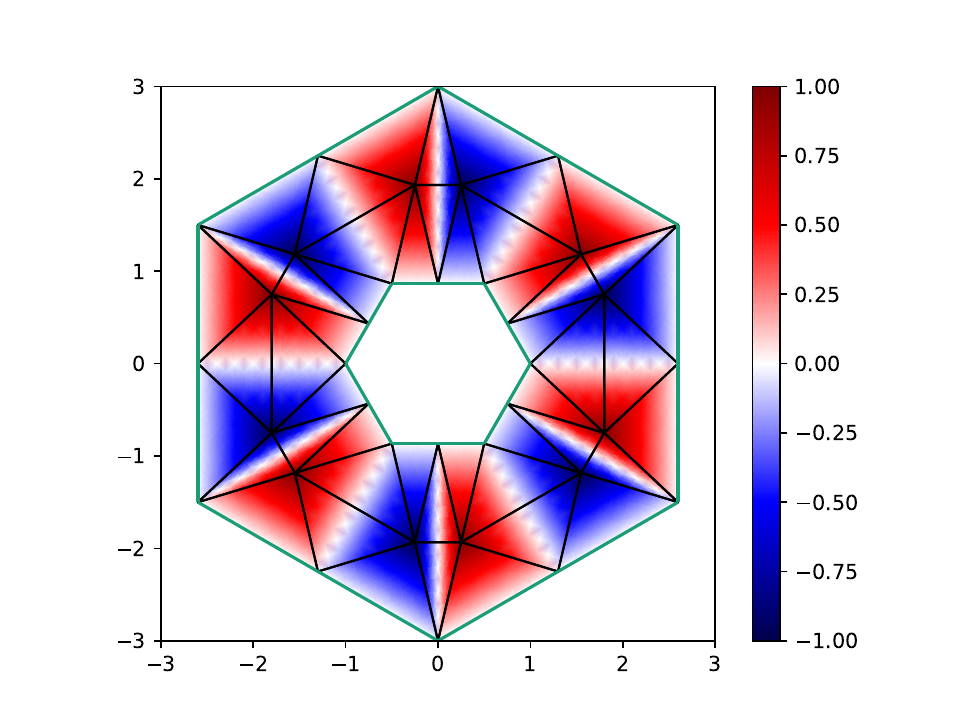}
\caption{An eigenvector on a multiply-connected domain with vanishing Cauchy data on both boundary components.}
\label{fig:inner}
\end{figure}

Our final example is the polyhedral annulus in \Cref{fig:inner}, which has two boundary components.  This domain is defined by an inner hexagon, with corners distance 1 from the origin, and an outer hexagon with corners distance $d=3$ from the origin.  We construct the mesh by first constructing a 12-triangle mesh of the annulus, matching the 12 elements outside the inner hexagon in \Cref{fig:hexhep}.  Then, each of these triangles is subdivided into four triangles, by introducing new nodes at the edge midpoints and connecting them with three additional edges.  By this construction, every node on the boundary of the domain is connected to two interior nodes, so that
the nodes on the boundary do not form a zero forcing set, but \Cref{thm:Sbound} implies $\dim \cS_{\rm in}(\lambda) \leq 1$ for all $\lambda>0$. 

We find an eigenvector that satisfies both Dirichlet and Neumann boundary conditions on the inner and outer boundary, shown in \Cref{fig:inner}.  As with the example in \Cref{ssec:hex}, the eigenvector alternates between values of +1 and -1 at each adjacent node in the interior of the annulus, and the colormap shows the resulting piecewise linear function.

Since the Cauchy data vanishes on the inner boundary, the eigenvector can be extended by zero to an arbitrary triangulation of the inner hexagon, as in~\Cref{fig:inner_deformed}, and this extension is guaranteed to be an eigenvector of the larger system.   We thus obtain an inner solution on a hexagonal domain that vanishes on several entire simplices, giving an inner solution for the discretization that violates the discrete analogue of the weak unique continuation principle.

\begin{figure}[!tbp]
  \includegraphics[width=0.8\textwidth]{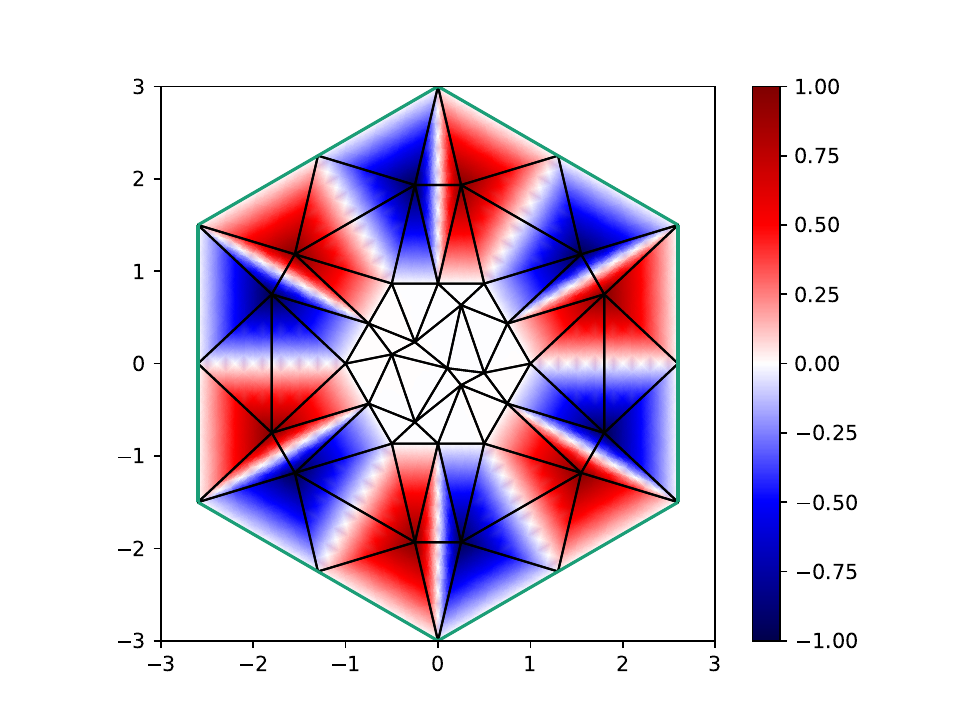}
\caption{An eigenvector on a simply-connected domain with vanishing Cauchy data and arbitrary triangulation of the inner hexagon.}
\label{fig:inner_deformed}
\end{figure}


\section{Application to eigenvalue interlacing}
\label{sec:interlacing}

We conclude the paper by comparing eigenvalues for the discretized Neumann and Dirichlet problems. These correspond to the generalized eigenvalue problems
\begin{align}
	Au &= \lambda Mu, \label{ev:Neu} \\
	A_{II} u_I &= \lambda M_{II} u_I, \label{ev:Dir}
\end{align}
as seen in \eqref{FEM:Neumann} and \eqref{FEM:Dirichlet}. We let
\begin{equation}
	m_N(\lambda) = \dim\ker(A - \lambda M), \qquad\qquad
	m_D(\lambda) = \dim\ker(A_{II} - \lambda M_{II}),
\end{equation}
denote the multiplicity of $\lambda$ in the Neumann and Dirichlet spectra, respectively, then
define the eigenvalue counting functions
\begin{equation}
	N_N(\lambda) = \sum_{s < \lambda} m_N(s), \qquad\qquad N_D(\lambda) = \sum_{s < \lambda} m_D(s).
\end{equation}

%

The main result in this section calculates the difference of counting functions, $N_N(\lambda) - N_D(\lambda)$, in terms of a discrete version of the Dirichlet-to-Neumann map, denoted $\Lambda(\lambda)$. When $\lambda$ is not a Dirichlet eigenvalue this is given by the Schur complement,
\begin{equation}
\label{DtNSchur}
	\Lambda(\lambda) = (A_{BB} - \lambda M_{BB}) - (A_{BI} - \lambda M_{BI}) (A_{II} - \lambda M_{II} )^{-1}  (A_{IB} - \lambda M_{IB}),
\end{equation}
and is defined on all of $\bbR^{n_B}$, as seen in \eqref{DtN1}. The general definition (when $\lambda$ is allowed to be a Dirichlet eigenvalue, in which case $\Lambda(\lambda)$ is only defined on a subspace of $\bbR^{n_B}$) is given in \Cref{ssec:DtN}. Letting $n_0(\cdot)$ and $n_-(\cdot)$ denote the number of zero and negative eigenvalues, respectively, we have the following.

%

\begin{theorem}
\label{thm:interlace}
For any $\lambda \in \bbR$, we have
\begin{equation}
\label{interlacing}
	N_N(\lambda) - N_D(\lambda) = n_-\big(\Lambda(\lambda) \big) + m_D(\lambda) - m_{\rm in}(\lambda)
\end{equation}
and
\begin{equation}
\label{nullity}
	m_N(\lambda) = n_0\big(\Lambda(\lambda)\big) + m_{\rm in}(\lambda),
\end{equation}
where
\begin{equation}
	m_{\rm in}(\lambda)  = \dim \cS_{\rm in}(\lambda)
\end{equation}
denotes the number of inner solutions.
\end{theorem}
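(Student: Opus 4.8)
The plan is to translate everything into statements about the inertia of the single symmetric matrix $C = A - \lambda M$ and then exploit a block, Haynsworth-type inertia decomposition. Since $M$ is positive definite, simultaneous diagonalization of the pencil $(A,M)$ together with Sylvester's law of inertia gives $N_N(\lambda) = n_-(C)$ and $m_N(\lambda) = n_0(C)$; applying the same reasoning to the positive-definite principal submatrix $M_{II}$ yields $N_D(\lambda) = n_-(C_{II})$ and $m_D(\lambda) = n_0(C_{II})$, where $C_{II} = A_{II} - \lambda M_{II}$. With these identifications, both \eqref{interlacing} and \eqref{nullity} follow at once from a single inertia identity,
\[
	n_-(C) = n_-(C_{II}) + \big(m_D(\lambda) - m_{\rm in}(\lambda)\big) + n_-\big(\Lambda(\lambda)\big), \qquad n_0(C) = m_{\rm in}(\lambda) + n_0\big(\Lambda(\lambda)\big),
\]
which is what I would establish.

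When $\lambda$ is not a Dirichlet eigenvalue, $C_{II}$ is invertible and this is just the classical Haynsworth formula $\mathrm{In}(C) = \mathrm{In}(C_{II}) + \mathrm{In}(\Lambda(\lambda))$ applied to the Schur complement \eqref{DtNSchur}, with $m_D = m_{\rm in} = 0$. The substance of the argument is the singular case. First I would decompose the interior space as $\bbR^{n_I} = \ran(C_{II}) \oplus \ker(C_{II})$ and write $C$ as a $3 \times 3$ block matrix over $\ran(C_{II})$, $\ker(C_{II})$, and the boundary space. On $\ran(C_{II})$ the restriction of $C_{II}$ is invertible, so a congruence eliminates all coupling between $\ran(C_{II})$ and the remaining two blocks; this contributes $\mathrm{In}(C_{II})$ (apart from its $m_D$ zero directions) and produces the generalized Schur complement $\Sigma = C_{BB} - C_{BI}C_{II}^{+}C_{IB}$ in the boundary block, where $C_{II}^{+}$ is the Moore--Penrose inverse. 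What remains is the residual block
\[
	R = \begin{pmatrix} 0 & C_{IB}^{(2)} \\ C_{BI}^{(2)} & \Sigma \end{pmatrix},
\]
where $C_{IB}^{(2)}$ is the component of $C_{IB}$ mapping into $\ker(C_{II})$.

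The heart of the matter is computing $\mathrm{In}(R)$, and this is where the inner solutions enter. The key observation is that $\ker\big(C_{BI}|_{\ker C_{II}}\big) = \ker(C_{II}) \cap \ker(C_{BI}) = \cS_{\rm in}(\lambda)$, so $C_{IB}^{(2)}$ has rank exactly $m_D(\lambda) - m_{\rm in}(\lambda)$. The $m_{\rm in}$-dimensional subspace $\cS_{\rm in}(\lambda)$ carries identically zero rows and columns in $R$ and thus contributes $m_{\rm in}$ to $n_0$. After splitting this off, $C_{IB}^{(2)}$ becomes full rank, so a further congruence pairs its range with an equal-dimensional complement in the boundary block into hyperbolic blocks, contributing $m_D - m_{\rm in}$ each to $n_+$ and $n_-$ and leaving, as Schur complement, precisely $\Sigma$ restricted to the solvability subspace $\{u_B : C_{IB}u_B \in \ran(C_{II})\}$. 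By construction this restricted quadratic form is exactly the Dirichlet-to-Neumann map $\Lambda(\lambda)$ as defined in \Cref{ssec:DtN}, so its inertia supplies the $n_-(\Lambda)$ and $n_0(\Lambda)$ terms. Summing the three contributions to $\mathrm{In}(R)$ and adding back $\mathrm{In}(C_{II})$ gives the two displayed identities, and hence the theorem.

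The main obstacle is the singular-$\lambda$ bookkeeping in the last step: carefully extracting the hyperbolic pairs from the anti-diagonal block $\begin{pmatrix} 0 & C_{IB}^{(2)} \\ C_{BI}^{(2)} & \Sigma\end{pmatrix}$ and verifying that the leftover Schur complement agrees, as a quadratic form, with the definition of $\Lambda(\lambda)$ on its restricted domain from \Cref{ssec:DtN}. Everything else is Sylvester's law and the standard Schur-complement congruence; the only genuinely new ingredient is the identification of $m_D - m_{\rm in}$ as the rank of the coupling $C_{BI}|_{\ker C_{II}}$, which is what makes the inner-solution count appear explicitly.
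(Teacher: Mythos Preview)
Your proposal is correct and follows essentially the same route as the paper. The paper invokes a generalized Haynsworth inertia formula (cited as \cite[Theorem~A.1]{BCCM}) as a black box to obtain $n_-(C) = n_-(C_{II}) + n_-(\Lambda(\lambda)) + i_\infty$ and $n_0(C) = n_0(C_{II}) + n_0(\Lambda(\lambda)) - i_\infty$, and then proves in a separate lemma that $i_\infty = \codim\cQ = m_D(\lambda) - m_{\rm in}(\lambda)$; your $3\times 3$ block congruence is precisely a hands-on proof of that cited formula, and your identification of $\operatorname{rank}(C_{BI}|_{\ker C_{II}}) = m_D - m_{\rm in}$ is exactly the content of that lemma. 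Your ``solvability subspace'' $\{u_B : C_{IB}u_B \in \ran C_{II}\}$ coincides with the paper's $\cQ$, since $\cQ = \ker(C_{BI}PC_{IB}) = \ker(PC_{IB})$, so the leftover form you describe is indeed the compressed map $\Lambda(\lambda)$ of \Cref{ssec:DtN}.
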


If $\lambda$ is not a Dirichlet eigenvalue, then $m_D(\lambda) = m_{\rm in}(\lambda) = 0$, in which case \eqref{interlacing} simplifies to $N_N(\lambda) - N_D(\lambda) = n_-\big(\Lambda(\lambda) \big)$. An analogous result for the continuous case was proved in \cite{F91}, where it was used to establish interlacing inequalities between the Dirichlet and Neumann eigenvalues of the Laplacian in $\bbR^d$; see also \cite{AM12} and \cite{F04}.

Such interlacing results are well studied in spectral theory; see, for instance, \cite{BCLS}, where the difference of counting functions was given a topological interpretation. The significance of this theorem to the current paper is the explicit connection that it provides between the space of inner solutions and the reduced Dirichlet-to-Neumann map.

As will be clear from the proof, this is a general statement about the eigenvalues of a block matrix. In fact, the result remains valid for any finite-dimensional subspace of $H^1(\Omega)$, and any partition of this subspace into ``interior" and ``boundary" elements, regardless of any geometric interpretation of the elements or the partition. It is through the geometric interpretation of the interior/boundary partition, however, that the formula gains substance, and that the notion of ``inner solution" becomes meaningful.

\subsection{The reduced Dirichlet-to-Neumann map}
\label{ssec:DtN}

Let $P$ denote the orthogonal projection (in $\bbR^{n_I}$) onto $\ker(A_{II} - \lambda M_{II} )$ and define the subspace
\begin{equation}
\label{Qdef}
	\cQ = \ker\big((A_{BI} - \lambda M_{BI}) P (A_{IB} - \lambda M_{IB}) \big) \subseteq \bbR^{n_B}.
\end{equation}
Now define
\begin{equation}
	\hat\Lambda(\lambda) = (A_{BB} - \lambda M_{BB}) - (A_{BI} - \lambda M_{BI}) (A_{II} - \lambda M_{II} )^{+}  (A_{IB} - \lambda M_{IB}) \colon \bbR^{n_B} \longrightarrow \bbR^{n_B},
\end{equation}
where $(\cdot)^+$ denotes the Moore--Penrose pseudoinverse (or any other suitable generalization of the inverse). Letting $P_\cQ$ denote the orthogonal projection onto $\cQ$, we define the reduced Dirichlet-to-Neumann map,
\begin{equation}
	\Lambda(\lambda) = P_\cQ \hat\Lambda(\lambda)\big|_{\cQ} \colon \cQ \longrightarrow \cQ,
\end{equation}
to be the compression of $\hat\Lambda(\lambda)$ to $\cQ$.

\begin{rem}
If $\lambda$ is not a Dirichlet eigenvalue, then $\ker(A_{II} - \lambda M_{II} )$ is trivial and hence $P = 0$. It follows that $\cQ = \bbR^{n_B}$, therefore $\Lambda(\lambda) = \hat \Lambda(\lambda)$ is defined on all of $\bbR^{n_B}$ and is given by the simplified expression \eqref{DtNSchur}.
\end{rem}

\subsection{Proof of \Cref{thm:interlace}}

Writing
\[
	A - \lambda M = \begin{pmatrix} A_{II} - \lambda M_{II} & A_{IB} - \lambda M_{IB} \\ A_{BI} - \lambda M_{BI}  & A_{BB} - \lambda M_{BB} \end{pmatrix},
\]
we use the generalized Haynsworth inertia formula in \cite[Theorem~A.1]{BCCM} to obtain
\begin{align}
	n_-(A - \lambda M) &= n_-(A_{II} - \lambda M_{II}) + n_-\big(\Lambda(\lambda)\big) + i_\infty, \label{Hn-} \\
	n_0(A - \lambda M) &= n_0(A_{II} - \lambda M_{II}) + n_0\big(\Lambda(\lambda)\big) - i_\infty, \label{Hn0}
\end{align}
where $i_\infty = n_B - \dim(\cQ)$ denotes the codimension of $\cQ$ in $\bbR^{n_B}$. For the left-hand side of \eqref{Hn-}, we compute
\begin{equation}
	n_-(A - \lambda M) = n_-\big( M^{-1/2} A M^{-1/2} - \lambda I) = N_N(\lambda),
\end{equation}
where the first equality follows from Sylvester's law of inertia, and the second comes from the fact that the Neumann eigenvalues (corresponding to the generalized eigenvalue problem \eqref{ev:Neu}) coincide with the eigenvalues of $M^{-1/2} A M^{-1/2}$. The same argument gives 
\begin{equation}
	n_-(A_{II} - \lambda M_{II}) = N_D(\lambda)
\end{equation}
for the right-hand side. Moreover, for \eqref{Hn0} we observe that $n_0(A - \lambda M) = m_N(\lambda)$ and $n_0(A_{II} - \lambda M_{II}) = m_D(\lambda)$. This allows us to rewrite \eqref{Hn-} and \eqref{Hn0} as
\begin{align}
	N_N(\lambda) &= N_D(\lambda) + n_-\big(\Lambda(\lambda)\big) + i_\infty, \\
	m_N(\lambda) &= m_D(\lambda) + n_0\big(\Lambda(\lambda)\big) - i_\infty,
\end{align}
so we just need to calculate $i_\infty$  to complete the proof.
%
%
%

\begin{lemma}
\label{lem:codim}
The subspace $\cQ \subseteq \bbR^{n_B}$ defined in \eqref{Qdef} has codimension $i_\infty = m_D(\lambda) - m_{\rm in}(\lambda)$.
\end{lemma}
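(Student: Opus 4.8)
The plan is to compute $i_\infty = n_B - \dim\cQ$ directly by identifying it with the rank of a single matrix, and then giving that rank a clean geometric meaning in terms of the Dirichlet eigenspace. Throughout I would abbreviate $C_{II} = A_{II} - \lambda M_{II}$, and likewise $C_{IB}, C_{BI}, C_{BB}$. Since $A$ and $M$ are symmetric we have $C_{BI} = C_{IB}^{T}$, and since $P$ is an orthogonal projection it satisfies $P = P^{T} = P^{2}$. These two structural facts are what make the whole argument go through.

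First I would note that $\cQ$ is by definition the kernel of the matrix $C_{BI} P C_{IB}$, so rank--nullity immediately gives $i_\infty = n_B - \dim\cQ = \operatorname{rank}(C_{BI} P C_{IB})$. The task thus reduces to computing this rank. The key simplification is the factorization $C_{BI} P C_{IB} = (P C_{IB})^{T}(P C_{IB})$, which follows from $C_{BI} = C_{IB}^{T}$ together with $P = P^{T} = P^{2}$. Using the standard identity $\operatorname{rank}(B^{T}B) = \operatorname{rank}(B)$ (equivalently $\ker(B^{T}B) = \ker B$) with $B = P C_{IB}$, we get $i_\infty = \operatorname{rank}(P C_{IB})$, and transposing once more, $i_\infty = \operatorname{rank}(C_{BI} P)$.

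Finally I would interpret this rank geometrically. Since $P$ projects onto $\ker C_{II}$, the range of $C_{BI} P$ is exactly $C_{BI}(\ker C_{II})$, the image of the Dirichlet eigenspace under $C_{BI}$. Applying rank--nullity to the restriction $C_{BI}\big|_{\ker C_{II}}$, whose kernel is $\ker C_{BI} \cap \ker C_{II} = \cS_{\rm in}(\lambda)$ by the definition \eqref{inner}, yields $\dim C_{BI}(\ker C_{II}) = \dim\ker C_{II} - \dim\cS_{\rm in}(\lambda) = m_D(\lambda) - m_{\rm in}(\lambda)$, which is the desired value of $i_\infty$.

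Every step here is routine linear algebra once the factorization is in place; the one point demanding care is the bookkeeping of transposes and the recognition that the kernel of the restricted map $C_{BI}\big|_{\ker C_{II}}$ is precisely the inner-solution space. I would flag the passage $C_{BI} P C_{IB} = (P C_{IB})^{T}(P C_{IB})$ and the subsequent use of $\operatorname{rank}(B^{T}B)=\operatorname{rank}(B)$ as the conceptual crux, since this is what converts the symmetric, Schur-complement-like object $C_{BI} P C_{IB}$ into the one-sided map $C_{BI} P$ whose range is manifestly $C_{BI}(\ker C_{II})$.
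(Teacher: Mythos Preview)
Your proof is correct and rests on the same key identification as the paper's, namely $\cQ=\ker(PC_{IB})$ via the factorization $C_{BI}PC_{IB}=(PC_{IB})^{T}(PC_{IB})$; the paper states this reduction without justification, whereas you spell it out. After that, your rank computation through $C_{BI}\big|_{\ker C_{II}}$ is a more direct path to $m_D(\lambda)-m_{\rm in}(\lambda)$ than the paper's chain of orthogonal-complement and dimension identities, but the substance is the same.
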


That is, $i_\infty$ counts the number of linearly independent Dirichlet eigenfunctions that are not also Neumann eigenfunctions.

\begin{proof}
For convenience, we let $T = (A_{IB} - \lambda M_{IB})$, so $\cQ = \ker(PT)$. We then compute
\begin{align*}
	\dim\ker(PT) &= \dim \ker T + \dim(\ker P \cap \ran T).
\end{align*}
Using the rank--nullity theorem, we get
\begin{align*}
	\dim \ker T &= n_B - \dim (\ker T)^\perp \\
	&= n_B - \dim \ran T^* \\
	&= n_B + \dim\ker T^* - n_I.
\end{align*}
Next, we write
\begin{align*}
	\dim(\ker P \cap \ran T) &= n_I - \dim(\ker P \cap \ran T)^\perp \\
	&= n_I - \dim\big((\ker P)^\perp + (\ran T)^\perp \big) \\
	&= n_I - \dim(\ran P + \ker T^*) \\
	&= n_I - \dim \ran P - \dim \ker T^* + \dim(\ran P \cap \ker T^*).
\end{align*}
Combining the above gives
\[
	\dim\ker(PT) = n_B  - \dim \ran P + \dim(\ran P \cap \ker T^*)
\]
Observing that $\ran P = \ker(A_{II} - \lambda M_{II})$ and that $T^* = (A_{IB} - \lambda M_{IB})^* = A_{BI} - \lambda M_{BI}$ completes the proof.
\end{proof}

\section*{Acknowledgements}
The authors thank Danny Dyer for helpful discussions on the topic of zero forcing.
G.C. acknowledges the support of NSERC grant RGPIN-2017-04259.
S.M. acknowledges the support of NSERC grants RGPIN-2019-05692 and RGPIN-2024-05675.

\bibliographystyle{amsplain}
\bibliography{DNeigenvalue}

\appendix
\section{Elemental Mass and Stiffness Matrices}\label{sec:elemental}

Consider an isoceles triangle with base length $\ell$ and height $h$.  We locally number the vertices on the triangle counter-clockwise from the lower-right.  With this, the elemental mass and stiffness matrices can be evaluated as
\[
M_e = \frac{\ell h}{24} \begin{pmatrix} 2 & 1 & 1 \\ 1 & 2 & 1 \\ 1 & 1 & 2 \end{pmatrix} \text{ and }
A_e = \begin{pmatrix}\frac{1}{2}\frac{h}{\ell} + \frac{1}{8}\frac{\ell}{h} & - \frac{1}{2}\frac{h}{\ell} + \frac{1}{8}\frac{\ell}{h} & -\frac{1}{4}\frac{\ell}{h} \\
  -\frac{1}{2}\frac{h}{\ell} + \frac{1}{8}\frac{\ell}{h} & \frac{1}{2}\frac{h}{\ell} + \frac{1}{8}\frac{\ell}{h} & -\frac{1}{4}\frac{\ell}{h} \\
  -\frac{1}{4}\frac{\ell}{h} & -\frac{1}{4}\frac{\ell}{h} & \frac{1}{2}\frac{\ell}{h} \end{pmatrix}.
\]

There are three equivalent families of elements in the hexagonal domain at the left of~\Cref{fig:hexhep}.  For those adjacent to the origin, we have $\ell = 1$ and $h = \frac{\sqrt{3}}{2}$, yielding 
\[
M_e = \frac{\sqrt{3}}{48}\begin{pmatrix} 2 & 1 & 1 \\ 1 & 2 & 1 \\ 1 & 1 & 2 \end{pmatrix} \text{ and }
A_e = \begin{pmatrix} \frac{\sqrt{3}}{3} & -\frac{\sqrt{3}}{6} & -\frac{\sqrt{3}}{6} \\
  -\frac{\sqrt{3}}{6} & \frac{\sqrt{3}}{3} & -\frac{\sqrt{3}}{6} \\
  -\frac{\sqrt{3}}{6} & -\frac{\sqrt{3}}{6} & \frac{\sqrt{3}}{3} \end{pmatrix}.
\]

For those with two vertices on the inner hexagon and one on the outer hexagon, we have $\ell = 1$ and $h = d-\frac{\sqrt{3}}{2}$, giving
\[
M_e = \frac{d-\sqrt{3}/2}{24}\begin{pmatrix} 2 & 1 & 1 \\ 1 & 2 & 1 \\ 1 & 1 & 2 \end{pmatrix}
 \text{ and }
 A_e = \left(2d-\sqrt{3}\right)\begin{pmatrix} \frac{1}{4} & -\frac{1}{4} & 0 \\ -\frac{1}{4} & \frac{1}{4} & 0 \\ 0 & 0 & 0 \end{pmatrix} + 
 \frac{1}{2d-\sqrt{3}}\begin{pmatrix} \frac{1}{4} & \frac{1}{4} & -\frac{1}{2} \\
   \frac{1}{4} & \frac{1}{4} & -\frac{1}{2} \\
   -\frac{1}{2} & -\frac{1}{2} & 1 \end{pmatrix}.
\]
We note that we need $d > 2/\sqrt{3}$ for the triangulation to be valid.  In this case, $2d-\sqrt{3} > 1/\sqrt{3}$, but we need $2d-\sqrt{3} > 1$ in order to guarantee that all off-diagonal entries in $A_e$ are negative,  Thus, we see that for $2/\sqrt{3} < d < (1+\sqrt{3})/2$, the $(1,2)$ and $(2,1)$ entries in $A_e$ will be positive.

Finally, for those elements with two vertices on the outer hexagon and one on the inner hexagon, we have $\ell = d$ and $h = d\sqrt{3}/2-1$, giving
\[
M_e = \frac{d(d\sqrt{3}-2)}{48}\begin{pmatrix} 2 & 1 & 1 \\ 1 & 2 & 1 \\ 1 & 1 & 2 \end{pmatrix}
\text{ and }
 A_e = \frac{\sqrt{3}d-2}{d}\begin{pmatrix} \frac{1}{4} & -\frac{1}{4} & 0 \\ -\frac{1}{4} & \frac{1}{4} & 0 \\ 0 & 0 & 0 \end{pmatrix} + 
 \frac{d}{\sqrt{3}d-2}\begin{pmatrix} \frac{1}{4} & \frac{1}{4} & -\frac{1}{2} \\
   \frac{1}{4} & \frac{1}{4} & -\frac{1}{2} \\
   -\frac{1}{2} & -\frac{1}{2} & 1 \end{pmatrix}.
\]
Here, we again see that if $d$ is close to $2/\sqrt{3}$, then the positive off-diagonal entries in the second matrix can dominate $A_e$, leading to non-M-matrix structure.

For~\Cref{lemma:m22}, the above lets us compute both $m_{2,2}$ and $m_{2,3}$ by summing elemental mass matrix contributions over the elements adjacent to node 2, giving
\begin{align*}
  m_{2,2} & =  2\left(\frac{\sqrt{3}}{24}\right) + 2\left(\frac{d-\sqrt{3}/2}{12}\right) + \frac{d(d\sqrt{3}-2)}{24}, \\
  m_{2,3} & = \left(\frac{\sqrt{3}}{48}\right) + \left(\frac{d-\sqrt{3}/2}{24}\right),
\end{align*}
where we assemble 5 elemental contributions for $m_{2,2}$, but only 2 for $m_{2,3}$.  For $d > 2/\sqrt{3}$, we clearly have $m_{2,2} > 2m_{2,3}$.  Moreover, we can compute
\[
m_{2,2} - 2m_{2,3} = \left(\frac{\sqrt{3}}{24}\right) + \left(\frac{d-\sqrt{3}/2}{12}\right) + \frac{d(d\sqrt{3}-2)}{24} = \frac{d^2\sqrt{3}}{24}.
\]

Similarly, we can assemble $a_{2,2}$ and $a_{2,3}$, getting
\begin{align*}
  a_{2,2} & =  \frac{2\sqrt{3}}{3} + \frac{2d-\sqrt{3}}{2} + \frac{1}{2(2d-\sqrt{3})} + \frac{d}{\sqrt{3}d-2},\\
  a_{2,3} & = -\frac{\sqrt{3}}{6}-\frac{2d-\sqrt{3}}{4}+\frac{1}{4(2d-\sqrt{3})}.
\end{align*}
This gives
\[
a_{2,2} - 2a_{2,3} = \sqrt{3} + (2d-\sqrt{3}) + \frac{d}{\sqrt{3}d-2} = 2d  + \frac{d}{\sqrt{3}d-2}.
\]

This gives us an expression for $\lambda_*$ in~\Cref{thm:hex} as
\[
\lambda_* = \frac{2d  + \frac{d}{\sqrt{3}d-2}}{\frac{d^2\sqrt{3}}{24}} = \frac{24(2d-\sqrt{3})}{d(\sqrt{3}d-2)}.
\]

\end{document}